\documentclass[letterpaper, 11pt,  reqno]{amsart}
\usepackage[margin=1.2in,marginparwidth=1.5cm, marginparsep=0.5cm]{geometry}

\usepackage{amsmath,amssymb,amscd,amsthm,amsxtra, esint, xcolor,mathtools, mathrsfs}

\usepackage[implicit=true,pdfborder={0 0 0}]{hyperref}
\hypersetup{hidelinks}

\allowdisplaybreaks[2]

\usepackage{color}

\newtheorem{theorem}{Theorem} [section]

\newtheorem{lemma}[theorem]{Lemma}
\newtheorem{proposition}[theorem]{Proposition}
\newtheorem{remark}[theorem]{Remark}
\newtheorem{example}[theorem]{Example}

\newtheorem{definition}[theorem]{Definition}
\newtheorem{corollary}[theorem]{Corollary}

\newcommand{\noi}{\noindent}
\newcommand{\Z}{\mathbb{Z}}
\newcommand{\R}{\mathbb{R}}

\newcommand{\T}{\mathbb{T}}

\newcommand{\bul}{\bullet}

\newcommand{\DL}{\Delta}

\newcommand{\E}{\mathbb{E}}

\newcommand{\F}{\mathcal{F}}

\newcommand{\cN}{\mathcal{N}}

\newcommand{\al}{\alpha}

\newcommand{\dl}{\delta}

\newcommand{\eps}{\varepsilon}

\newcommand{\g}{\gamma}

\newcommand{\s}{\sigma}

\newcommand{\wt}{\widetilde}

\newcommand{\LRA}{\Longrightarrow}

\renewcommand{\O}{\Omega}

\newcommand{\ind}{\mathbf 1}

\newcommand{\PP}{\mathbb{P}}

\def\e{\varepsilon}

\newcommand{\N}{\mathbb{N}}

\newcommand{\Id}{\textup{Id}}
\newcommand{\Var}{\textup{Var}}
\newcommand{\fm}{\mathfrak{m}}

\newcommand{\fq}{\mathfrak{q}_\ast}

\newcommand{\fH}{\mathfrak{H}}

\numberwithin{equation}{section}
\numberwithin{theorem}{section}

\makeatletter
\@namedef{subjclassname@2020}{\textup{2020} Mathematics Subject Classification}
\makeatother

\title[BMD principle]{Breuer-Major-Donsker  invariance principle}

\author[P.~Mansanarez, G.~Poly, and G.~Zheng]{Paul Mansanarez, Guillaume Poly,  and Guangqu Zheng}

\date{\today}

\address{
Paul Mansanarez\\
Nantes Universit\'e/Universit\'e libre de Bruxelles\\
France/Belgium. 
}

\email{paul.mansanarez@ulb.be}

\address{
Guillaume Poly\\
Nantes Universit\'e \\
France.
}

\email{guillaume.poly@univ-nantes.fr}

\address{
Guangqu Zheng\\
Boston University\\
USA.
 }

\email{gzheng90@bu.edu}

\subjclass[2020]{60F17, 60G46, 60G15.}

\keywords{Donsker's invariance principle;
Breuer-Major theorem; tightness; martingale functional CLT;
hypercontractivity; non-determinism; Kolmogorov's prediction formula.}

\begin{document}
\baselineskip = 14pt

\begin{abstract}
We prove a Breuer-Major-type Donsker's invariance principle for 
stationary Gaussian sequences 
under the natural {\it finite-variance assumption} on the test function.  
This result, which we call the {\it Breuer-Major-Donsker} principle, 
or simply the {\it BMD principle}, 
removes the additional moment assumption 
imposed in the functional Breuer-Major theorem of Nourdin and Nualart
({\it  Probab. Theory Related Fields}, 2020).

Our method does not rely on the Malliavin-calculus estimates used by Nourdin 
and Nualart, in particular Meyer's inequality.  Instead, it is based on a
predictable--martingale decomposition of the partial-sum process,
which is of independent interest. 
We also make systematic use of  {\it non-determinism},
a central  notion in   Gaussian prediction theory.
In the non-deterministic case, the martingale part is handled by the martingale
functional central limit theorem, while the predictable remainder gains
integrability above order two through Ornstein-Uhlenbeck smoothing.  
In the deterministic case, the martingale part vanishes, and the smoothing mechanism is
no longer available along the full sequence.  
Nevertheless, 
under an additional
mild assumption on the covariance function, a suitable decimation recovers
non-determinism and reduces the proof to the non-deterministic case.
\end{abstract}

\maketitle

\tableofcontents

\section{Introduction}

In the 1950s, M.D.~Donsker first provided a general invariance principle 
for sums of independent random variables.
More precisely, 
Donsker's invariance principle 
identifies Brownian motion as the universal scaling limit of 
centered partial sums  with finite variance
(\cite{Don51, Don52}).
P.~Billingsley derived a principle for stationary and ergodic
martingale differences in his 1968 book (see also \cite{Bill99}),
extending an earlier work \cite{Ros67} by
B.~Ros\'en on central limit theorems  (CLTs) for dependent random variables. 
See, e.g., the book \cite{HH80} by Hall and Heyde for a brief overview
and see also the survey \cite{Whitt07} on martingale CLTs.

 In 1983, P.~Breuer and  P.~Major 
 provided a  CLT  \cite{BM83}, 
 motivated by  the non-central limit theorems of Dobrushin, Major, Rosenblatt, 
 and Taqqu during  1977-1981 (see \cite{Taqqu77,Taqqu79, DM79, Major81,Ros81}). 
 Unlike these works,  Breuer and Major were interested in the asymptotic normality 
 of nonlinear functionals over stationary  Gaussian  fields 
 when the corresponding  correlation function decays fast enough. 
 Let us first fix some notations and state the Breuer-Major theorem. 
 Let 
$
\mathbf{G}:= (G_k)_{k\in\Z}
$
be a centered stationary Gaussian sequence such that
\[
\E[G_kG_\ell]=\rho_G(k-\ell)  
\]
 with $ \rho_G(0)=1$ (i.e., $G_0\sim\cN(0,1)$). 
 Let $\g$ denote the standard Gaussian measure on $\R$ 
 throughout this paper. 
Let $\{ H_k: k\in\N\cup\{0\}\}$ be the Hermite polynomials
defined by 
$H_k(x) = (-1)^k e^{x^2/2} \frac{d^k}{dx^k} (e^{-x^2/2})$.\footnote{The first few Hermite polynomials 
are given by $H_0(x) =1$, $H_1(x)= x$, $H_2(x) = x^2-1$, and $H_{p+1}(x) = xH_p(x) - p H_{p-1}(x)$
for $p\geq 2$. See, e.g., \cite[Section 1.4]{NP12}.
} It is well known that $\{ \frac{H_k}{\sqrt{k!}}: k\in\N\cup\{0\}\}$ is an orthonormal basis of 
the Hilbert space $L^2(\g)$, i.e., any
  $\varphi\in L^2(\g)$ admits an expansion in Hermite polynomials.
  More precisely, we assume that 
  $\varphi$ has {\bf Hermite rank} $d\geq 1$, meaning
\begin{align}\label{her_varphi}
\varphi = \sum_{q\geq d} c_q H_q, 
\end{align}
where $c_d \neq 0$ and $\| \varphi\|_{L^2(\g)}^2 = \sum_{q\geq d} c_q^2 q! < +\infty$. 
Define the process

\noi
\begin{align}\label{def_VN}
V_n(t) 
= \frac{1}{\sqrt{n}} \sum_{k=1}^{\lfloor nt \rfloor} \varphi(G_k)
\text{\quad for $t\in [0,1]$. }
\end{align}

\begin{theorem}[\textsf{Breuer-Major theorem} \cite{BM83}]
\label{thm_BM}
Let $\{G_n\}_{n\in\Z} $ be a  centered stationary Gaussian sequence 
with covariance $\rho_G$ and let $\varphi \in L^2(\g)$ have Hermite rank $d\ge 1$
as in \eqref{her_varphi}.
If  $\rho_G\in\ell^d(\Z)$,
then the finite-dimensional distributions of  $V_n$
converge to those of $\s W$, where $W$ is standard Brownian motion and 
\begin{align}\label{def_s}
 \s^2 
 := \sum_{q=d}^\infty q! c_q^2 \sum_{k\in\Z} \rho_G(k)^q\in[0,\infty). 
\end{align}
Here,    $\sigma^2\in[0,\infty)$  is part of the conclusion.
\end{theorem}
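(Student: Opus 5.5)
The plan is the now-standard fourth-moment route: chaos decomposition, a reduction to finitely many chaoses, and the fourth moment theorem for each truncated chaos.

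\emph{Variance and the constant $\s^2$.} Write $G_k = X(e_k)$ for an isonormal process $X$ on a Hilbert space $\fH$ with $\langle e_k,e_\ell\rangle_\fH=\rho_G(k-\ell)$. Then $H_q(G_k)=I_q(e_k^{\otimes q})$ and $\E[H_q(G_k)H_p(G_\ell)]=\dl_{pq}\,q!\rho_G(k-\ell)^q$. Since $|\rho_G|\le 1$ and $q\ge d$, we have $\sum_k|\rho_G(k)|^q\le\sum_k|\rho_G(k)|^d<\infty$. Hence
\[
\Var\Big(\tfrac1{\sqrt n}\sum_{k=1}^{n}\varphi(G_k)\Big)=\sum_{q\ge d}q!c_q^2\sum_{|j|<n}\big(1-\tfrac{|j|}{n}\big)\rho_G(j)^q .
\]
By dominated convergence, this tends to $\s^2$. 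The series is dominated by $\|\varphi\|_{L^2(\g)}^2\sum_j|\rho_G(j)|^d$, which proves $\s^2<\infty$. We also get $\s^2\ge0$ as a limit of variances.

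\emph{Tail truncation.} Set $\varphi_M=\sum_{q=d}^M c_qH_q$ and let $V_n^M$ be the corresponding process. The same computation gives, uniformly in $n$ and $t$,
\[
\E|V_n(t)-V_n^M(t)|^2\le t\sum_{q>M}q!c_q^2\sum_j|\rho_G(j)|^d .
\]
This tends to $0$ as $M\to\infty$. By a standard approximation argument (e.g.\ Billingsley's Theorem 3.2), it therefore suffices to prove finite-dimensional convergence of $V_n^M$ to $\s_M W$, where $\s_M^2$ is the truncated analogue of \eqref{def_s}, and to note that $\s_M\to\s$.

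\emph{Finite chaos: fourth moment theorem.} For fixed $0=t_0<t_1<\dots<t_m\le1$, consider the vector of increments $\big(V_n^M(t_i)-V_n^M(t_{i-1})\big)_i$, each decomposed into chaos components $F_{n,q,i}=\frac{c_q}{\sqrt n}\sum_{nt_{i-1}<k\le nt_i}I_q(e_k^{\otimes q})$.
\begin{itemize}
\item[(a)] \emph{Covariances.} Covariances between different $q$ vanish. Between different time blocks they tend to $0$, because only $O(1)$-range boundary effects survive, controlled by the summability of $|\rho_G|^d$. Within a block the variance tends to $(t_i-t_{i-1})q!c_q^2\sum_j\rho_G(j)^q$.
\item[(b)] \emph{Componentwise normality.} By the Peccati--Tudor theorem, it is enough to show that each $F_{n,q,i}$ is asymptotically normal. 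By the Nualart--Peccati fourth moment theorem, this reduces to showing $\|f_n\otimes_r f_n\|_{\fH^{\otimes(2q-2r)}}\to0$ for $1\le r\le q-1$, with $f_n=n^{-1/2}\sum_k e_k^{\otimes q}$.
\end{itemize}

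\emph{Main obstacle: the contraction estimate.} This is the heart of the argument. Expanding,
\[
\|f_n\otimes_r f_n\|^2=\frac1{n^2}\sum_{k_1,\dots,k_4}\rho(k_1-k_2)^r\rho(k_3-k_4)^r\rho(k_1-k_3)^{q-r}\rho(k_2-k_4)^{q-r}.
\]
I would bound this using $|\rho|^{a}\le|\rho|^{b}$ for $a\ge b$ together with Young's convolution inequality on $\Z$. Set $a=|\rho|^r$ and $b=|\rho|^{q-r}$; the sum becomes $n^{-2}$ times a cyclic convolution over indices restricted to a window of length $n$. Young's inequality gives the bound
\[
n^{-1}\,\big\|a\ind_{[-n,n]}\big\|_{\ell^{q/r}}^2\,\big\|b\ind_{[-n,n]}\big\|_{\ell^{q/(q-r)}}^2 ,
\]
with both norms finite because $\rho\in\ell^q$. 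This alone gives $O(1)$ rather than $o(1)$, so one more step is needed. Split $\rho=\rho\ind_{|j|\le N}+\rho\ind_{|j|>N}$. The tail contributes at most $C\big(\sum_{|j|>N}|\rho(j)|^q\big)^{\theta}$ for some $\theta>0$. The truncated part has bounded support, and the resulting quadruple sum is $O(nN^3)$, which divided by $n^2$ tends to $0$. Letting $n\to\infty$ and then $N\to\infty$ gives the claim.

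Combining the steps, the increments converge to independent centered Gaussians with variances $\s_M^2(t_i-t_{i-1})$. Letting $M\to\infty$ then yields convergence of the finite-dimensional distributions of $V_n$ to those of $\s W$.
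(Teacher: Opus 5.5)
Your proposal is correct and is essentially the fourth-moment proof of \cite[Chapter~7]{NP12}: truncation to finitely many chaoses, Peccati--Tudor, and a cutoff at $N$ in the contraction estimate. That is exactly the route the paper cites for Theorem~\ref{thm_BM} instead of proving it itself. One slip: your displayed Young-type bound should have an $O(1)$ prefactor, not $n^{-1}$, because passing from the exponents $q/r,\,q/(q-r)$ to admissible Young exponents via H\"older on the window $[-n,n]$ costs a factor of order $n$. This yields a bound $C\,\|a\ind_{[-n,n]}\|_{\ell^{q/r}}^2\|b\ind_{[-n,n]}\|_{\ell^{q/(q-r)}}^2$, multilinear in the four factors, which is what your tail estimate actually uses; as written, your bound would give $O(1/n)$ and contradict your next sentence.
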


Theorem \ref{thm_BM} was first proved by Breuer and Major in \cite{BM83}
using   the method of moments, while 
we refer interested readers to \cite[Chapter 7]{NP12} 
for a proof based on the fourth moment theorems
of Nualart, Peccati, and Tudor. 
Note that any CLT-type result involving Hermite ranks and summability of covariance functions
 is  called a
``Breuer-Major theorem'' nowadays, 
in honor of the seminal paper \cite{BM83}; see, e.g., the introduction in  \cite{NPP11}
for a brief history of Theorem \ref{thm_BM}. 
Since its appearance, the Breuer-Major theorem
has become a standard tool for the asymptotic analysis of nonlinear functionals
of Gaussian sequences and fields. Its applications include 
empirical processes under Gaussian subordination
\cite{DT89,CM96}, power variations and statistical
estimation for fractional Gaussian models \cite{Coe01,NNT10},
level-crossing and excursion functionals \cite{KL01,Kratz06}, 
wavelet estimation of  memory parameters
 \cite{BT10, KLLM13}, 
 spatial  averages and continuous Gaussian-field limits \cite{CNN20,NZ20}, 
random corrector problem in homogenization \cite{NZ20_osc},
 and geometric statistics of Gaussian random
fields \cite{EL16,MN24}, to name a few.

%
%
%

It is not difficult to see that Theorem \ref{thm_BM} generalizes the classical CLT for
sums of i.i.d. random variables with finite variance.\footnote{Indeed, 
suppose $\{ X_j\}_{j\in\Z}$
is an i.i.d. sequence, then one can write $X_j = \varphi(G_j)$
with $\varphi = F_X^{-1}\circ \Phi$ and $\{G_j\}$ i.i.d. standard normal, 
where $F_X^{-1}$ is the generalized quantile function for the distribution of $X_0$
and $\Phi$ is the standard normal cumulative function.
} 
 Then, the functional version of Theorem \ref{thm_BM}
generalizes   Donsker's invariance principle
to the dependent case. 
The tightness problem for the corresponding functional CLT remained open, at
this level of generality, for more than three decades.  Nourdin and Nualart
settled it under the additional moment assumption \eqref{p>2}, through an elegant
application of Meyer's inequality (\cite{Meyer84}), a fundamental tool in Malliavin calculus;
see \cite[Theorem~1.1]{NN20}.

\begin{theorem}[\cite{NN20}]
\label{thm_NN20}
Under the same setting as Theorem~\ref{thm_BM},
assume additionally that
 \begin{align}\label{p>2}
\text{$\varphi\in L^{p}(\g)$
for some $p>2$.}
\end{align}

\noi
 Then, the process $V_n$ converges in law in $\mathbf{D}([0,1])$
to $\s W$. 
\end{theorem}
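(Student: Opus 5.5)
Since Theorem~\ref{thm_BM} already gives convergence of the finite-dimensional distributions of $V_n$ to those of $\s W$, the only remaining task is tightness of $\{V_n\}$ in $\mathbf{D}([0,1])$. We will use the classical moment criterion of Billingsley: it suffices to find $\beta>1$ and $r>0$ such that, for all $0\le s<t\le 1$ and all $n$,
\begin{align*}
\E\bigl[|V_n(t)-V_n(s)|^{r}\bigr]\le C\Bigl(\frac{\lfloor nt\rfloor-\lfloor ns\rfloor}{n}\Bigr)^{\beta}.
\end{align*}
This bound then holds on the grid, and the standard chaining argument for partial-sum processes (Billingsley, Theorem~13.5 or its discrete variant) applies. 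By stationarity, this reduces to the following estimate for $m=\lfloor nt\rfloor-\lfloor ns\rfloor$:
\begin{align*}
\Bigl\|\sum_{k=1}^{m}\varphi(G_k)\Bigr\|_{L^r(\PP)}\le C\sqrt{m}\qquad\text{for some } r>2.
\end{align*}
With $\beta=r/2>1$ this yields tightness.

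The $L^r$ bound comes from a truncation of the chaos expansion. We fix $p>2$ from \eqref{p>2} and pick $r\in(2,p)$. We split $\varphi=\varphi_{\le Q}+\varphi_{>Q}$, where $\varphi_{\le Q}=\sum_{q=d}^{Q}c_qH_q$.

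For the finite part, hypercontractivity on each fixed chaos gives
\begin{align*}
\|F\|_{L^r}\le (r-1)^{q/2}\|F\|_{L^2}\quad\text{for } F \text{ in the } q\text{-th chaos}.
\end{align*}
Combined with the $L^2$ computation $\E\bigl[(\sum_{k\le m}H_q(G_k))^2\bigr]=q!\sum_{|j|<m}(m-|j|)\rho_G(j)^q\le q!\,m\,\|\rho_G\|_{\ell^d}^d$ (using $|\rho_G|\le1$ and $q\ge d$), this bounds the finite part by $C_Q\sqrt m$.

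The tail $\varphi_{>Q}$ is the difficulty, since hypercontractivity constants blow up with $q$. Here we use Ornstein--Uhlenbeck smoothing. Write $\varphi_{>Q}$ in terms of $P_\theta\psi$, where $\psi=\sum_{q>Q}e^{\theta q}c_qH_q$. By Nelson's theorem $P_\theta$ maps $L^{r}$ to $L^{p}$ when $e^{2\theta}\le (p-1)/(r-1)$. Interpolating between the $L^2$ bound for high chaoses (which gain a factor through $\sup_{q>Q}$ of the decay) and the crude $L^p$ bound $\|\sum_{k\le m}\varphi(G_k)\|_{L^p}\le m\|\varphi\|_{L^p}$ should give a bound of order $m^{1/2+\epsilon}$ with $\epsilon$ small. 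Equivalently, one can apply Riesz--Thorin type interpolation of the linear map $\varphi\mapsto m^{-1/2}\sum_{k\le m}\varphi(G_k)$ between $L^2(\g)\to L^2(\PP)$ (norm bounded on rank-$\ge d$ functions by the summability of $\rho_G$) and $L^p(\g)\to L^p(\PP)$ (norm $\le\sqrt m$). This produces, for $r\in(2,p)$, a bound $\|\sum_{k\le m}\varphi(G_k)\|_{L^r}\le C m^{1/2+\eta}$ with $\eta\to0$ as $r\downarrow2$.

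The main obstacle is to make the exponent $\beta=\frac r2(1-2\eta')$ still exceed $1$. Plain interpolation gives $\eta$ of order $(r-2)$ times a constant, so $\beta=\frac r2-O(r-2)$ may fall below $1$. To fix this, we would instead apply hypercontractivity chaos by chaos on the tail, with the smoothing: we take $r$ so close to $2$ that $(r-1)^{q/2}$ is dominated by the decay obtained by writing $\varphi_{>Q}=P_\theta\psi$. This is the Meyer-inequality step in Nourdin--Nualart. If needed, we would fall back on their route: represent $\sum_k\varphi(G_k)=\delta(-DL^{-1}\cdot)$, use Meyer's inequality to obtain $\|\cdot\|_{L^r}\lesssim\sqrt m$ for $r<p$ sufficiently close to $2$, and conclude as above.
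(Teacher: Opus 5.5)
\textbf{There is a genuine gap.} Your reduction is sound: by Theorem~\ref{thm_BM} and Lemma~\ref{lem_NN20} it suffices to prove $\|\sum_{k=1}^m\varphi(G_k)\|_r\le C\sqrt m$ for some $r>2$, uniformly in $m$. Your bound for the finitely many chaoses $q\le Q$ is also correct. But the tail $\varphi_{>Q}$ is the entire content of the theorem, and none of your mechanisms controls it.

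\emph{The smoothing step.} The function $\psi=\sum_{q>Q}e^{\theta q}c_qH_q$ does not exist in general, because $\varphi\in L^p(\g)$ gives no exponential decay of $c_q^2q!$. For example, $\varphi=\mathrm{sign}$ lies in every $L^p(\g)$, yet $c_q^2q!\asymp q^{-3/2}$ along odd $q$. Hence $\sum_q e^{2\theta q}c_q^2q!=\infty$ for every $\theta>0$. Equivalently, $P_\theta$ has real-analytic range and cannot be inverted on a discontinuous function. Your Nelson condition is also reversed: $P_\theta$ maps $L^r$ into $L^p$ only when $e^{2\theta}\ge (p-1)/(r-1)$. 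For the same reason a chaos-by-chaos estimate cannot be summed. In $L^r$, $r>2$, there is no orthogonality between chaoses, and $\sum_q (r-1)^{q/2}|c_q|\sqrt{q!}$ diverges for such $\varphi$ (it already diverges at $r=2$).

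\emph{The interpolation step.} The bound $\|\sum_{k\le m}\varphi(G_k)\|_r\le Cm^{1/2+\eta}$ with $\eta>0$ is not a problem about the exponent $\beta$. It gives $\E|V_n(t)-V_n(s)|^r\le C(m/n)^{r/2}m^{r\eta}$, which at $m=n$ equals $Cn^{r\eta}$. So it does not even bound $\|V_n(1)\|_r$ uniformly in $n$, and no choice of $r\in(2,p)$ fixes this. The $\sqrt m$ loss at the $L^p$ endpoint cannot be interpolated away.

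\emph{The fallback.} What remains is ``use Meyer's inequality as Nourdin--Nualart do,'' which cites the missing estimate rather than proving it. As sketched, with a single divergence, it would not work under $\rho_G\in\ell^d(\Z)$ alone. Write $\sum_k\varphi(G_k)=\delta(u)$ with $u=\sum_k\varphi_1(G_k)e_k$ and $\varphi_1=\sum_{q\ge d}c_qH_{q-1}$. Meyer's inequality then leads you to bound $\big\|\|u\|_{\fH}^2\big\|_{p/2}$. Minkowski's inequality in $L^{p/2}$, applied to $\|u\|_{\fH}^2=\sum_{k,l}\varphi_1(G_k)\varphi_1(G_l)\rho_G(k-l)$, produces $\sum_{k,l}|\rho_G(k-l)|$, i.e.\ it requires $\rho_G\in\ell^1(\Z)$.

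The mechanism that brings in the right power is the $d$-fold representation $\varphi(G_k)=\delta^d\big(\varphi_d(G_k)e_k^{\otimes d}\big)$ with $\varphi_d=\sum_{q\ge d}c_qH_{q-d}$. With this, the $\fH^{\otimes(d+j)}$-norms of $D^ju$ produce $|\rho_G(k-l)|^{d+j}$, which is summable. Meyer's inequalities are then used twice: to bound $\|\delta^d(u)\|_p$ by $\sum_{j\le d}\|D^ju\|_{L^p(\Omega;\fH^{\otimes(d+j)})}$, and to show $\varphi_d^{(j)}\in L^p(\g)$ for $j\le d$.

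\emph{Relation to the paper.} The paper does not reprove Theorem~\ref{thm_NN20}. It quotes \cite{NN20}, whose proof is precisely this $L^p$ estimate fed into Lemma~\ref{lem_NN20}. The Ornstein--Uhlenbeck smoothing used in the paper is of a different nature. It is never applied to $\varphi$ itself; it is produced by conditioning, $\E[\varphi(G_k)|\F_{k-1}]=P_\tau\varphi(Z_k)$, which needs non-determinism. The martingale part is handled separately by Proposition~\ref{MCLT} using only second moments. This is why that smoothing cannot be transplanted to $\varphi_{>Q}$. It is also why the paper's own results do not recover Theorem~\ref{thm_NN20} in the deterministic case without \eqref{DEC}.
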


Here, $\mathbf{D}([0,1])$
stands for  the space of all real-valued
c\`adl\`ag functions on $[0,1]$, that is, all functions
$x:[0,1]\to\R$, which are right-continuous with  left limits.
We endow $\mathbf{D}([0,1])$ with the Skorokhod topology.
See Section \ref{SEC_22} for more details.

\bigskip

 Note that the moment assumption \eqref{p>2}
 excludes many test functions in $L^2(\g)$, 
 for example, 
  $\varphi(x) = (2\pi)^{1/4} \exp(\frac{x^2}{4} - |x|)$.
Our work is motivated by the important contribution \cite{NN20} to functional CLTs,
and our goal is to remove the additional assumption \eqref{p>2} in Theorem \ref{thm_NN20}, that is, 
we aim at establishing the Breuer-Major-type  Donsker's  invariance principle 
under the  natural  finite-variance assumption. In the following, we will call it 
the {\bf\textsf{BMD principle}}.

\subsection{Main results: \textsf{BMD principle}}
Let us first recall an important notion (non-determinism) 
in Gaussian prediction theory \cite{Bing22}. 
Let
$
\F_k:=\sigma\{G_j:j\le k\}$
 for $k\in\Z$.
 For a stationary Gaussian sequence $\mathbf{G}$,
 we say it is  {\bf non-deterministic} 
in the one-step prediction sense
if 

\noi
\begin{align} \label{def_nu1}
\nu^2:=\E\big[\big(G_0-\E[G_0\mid \F_{-1}]\big)^2\big] > 0,
\end{align}

\noi
which is equivalent to  $G_0\notin L^2(\Omega, \F_{-1}, \PP)$.
The quantity $\nu^2$, called the one-step prediction error, 
measures the variance of the new innovation not predicted from the past.
The case $\nu^2 > 0$ is the 
{\it\textsf{non-deterministic case}}
and $\nu^2 = 0$ is called  the {\it\textsf{deterministic case}}.\footnote{The deterministic case
 should be understood as a situation, in which
the whole randomness of the process is already present in the remote past.
Equivalently, the filtration generated by the process carries no genuine
innovations: adding one more observation does not enlarge the information.
In particular, the present variable $G_0$ is encoded not only in the
immediate past, but in every arbitrarily remote past sigma-field
$\F_j$, $j\leq -1$. For example,  given $A, B$ two independent standard normals
and $\theta\in\R$, 
the sequence 
$G_k = A \cos(k\theta) + B \sin(k\theta)$, $k\in\Z$
is then stationary and deterministic. }
%
By Herglotz's theorem (\cite[Theorem 4.3.1]{BD91}),
the correlation function $\rho_G$ admits a spectral 
representation:

\noi
\begin{align*} 
\rho_G(k) = \frac{1}{2\pi}\int_{-\pi}^\pi e^{-i k \xi} \mu(d\xi),
\end{align*}

\noi
where $\mu$ is known as the spectral measure associated to $\mathbf{G}$. 
If $\mu$ admits a density $\fm$, 
then the classical Kolmogorov's prediction formula
connects non-determinism directly to the spectral density $\fm$
 through

\noi
\begin{align} \label{def_nu2}
\nu^2 = \exp\bigg( \frac{1}{2\pi} \int_{-\pi}^{\pi} \log \fm(\lambda) d\lambda\bigg).
\end{align}
See \cite[Theorem 5.8.1]{BD91}.
\noi
In the i.i.d. case where $\fm \equiv 1$, we have $\nu^2 = 1$.
For example, if $\fm = 0$ on a set of positive measure, 
then  $\nu^2 = 0$ and thus $\mathbf{G}$ is deterministic in 
the one-step 
prediction sense. 
See also Examples \ref{exam1},  \ref{exam2}, and \ref{not18}
in Section \ref{SEC_32}.

\bigskip

Now we are ready to state our first main result, whose proof is given in Section \ref{SEC_31}.

\begin{theorem}[\textsf{Non-deterministic case}]  \label{thm_BMD1}
Let   $\nu^2 > 0$ 
as in \eqref{def_nu1}.  Let the assumptions of Theorem \ref{thm_BM} hold. 
Then, as $n\to+\infty$,
 $V_n$ converges in law to $\s W$ in $\mathbf{D}([0,1])$,
 denoted by $V_n\LRA \s W$.

\end{theorem}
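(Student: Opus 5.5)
Since Theorem \ref{thm_BM} already gives convergence of the finite-dimensional distributions, the only task is tightness of $(V_n)$ in $\mathbf{D}([0,1])$. I would not estimate $V_n$ directly. Instead I would decompose $\varphi(G_k)$ along the filtration $\F_k=\sigma\{G_j:j\le k\}$ into a martingale difference plus a predictable part:
\[
\varphi(G_k)=\big(\varphi(G_k)-\E[\varphi(G_k)\mid\F_{k-1}]\big)+\E[\varphi(G_k)\mid\F_{k-1}] =: D_k + P_k .
\]
Then $V_n=M_n+R_n$, with $M_n(t)=n^{-1/2}\sum_{k\le \lfloor nt\rfloor} D_k$ and $R_n$ defined analogously from $P_k$. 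The two pieces are handled by separate mechanisms.

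\textbf{Step 1: the martingale part.} The sequence $(D_k)$ is a stationary, ergodic martingale difference sequence in $L^2$; ergodicity holds because $\mathbf G$ is non-deterministic, so $\mu$ has a density and the shift is mixing. Billingsley's functional CLT for stationary ergodic martingale differences then gives $M_n\LRA \|D_0\|_{L^2}\,W$. In particular $(M_n)$ is tight, and only the $L^2$ assumption on $\varphi$ is needed.

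\textbf{Step 2: the predictable part, via Ornstein--Uhlenbeck smoothing.} Write $G_k=\E[G_k\mid\F_{k-1}]+\nu\xi_k$, where $\xi_k\sim\cN(0,1)$ is independent of $\F_{k-1}$ and $\nu>0$ by \eqref{def_nu1}. The conditional mean $\E[G_k\mid\F_{k-1}]$ has variance $1-\nu^2$. Hence
\[
P_k=\int\varphi\big(\E[G_k\mid \F_{k-1}]+\nu z\big)\,\g(dz)=(T_{s}\varphi)\big(Y_k\big),
\]
where $Y_k=\E[G_k\mid\F_{k-1}]/\sqrt{1-\nu^2}$ is standard normal, $e^{-s}=\sqrt{1-\nu^2}$, and $T_s$ is the Ornstein--Uhlenbeck semigroup. (The case $\nu=1$ is trivial.) By Nelson's hypercontractivity, $T_s\varphi\in L^{p}(\g)$ with $p=1+e^{2s}>2$. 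So the summands of $R_n$ are functionals of a stationary Gaussian sequence with a test function having moments of order strictly above two. Moreover, $T_s\varphi$ has Hermite coefficients $c_qe^{-qs}$, so its Hermite rank is still at least $d$.

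\textbf{Step 3: tightness of $R_n$, the main obstacle.} I would like to apply Theorem \ref{thm_NN20} or its tightness argument to $R_n$. The difficulty is that $Y_k$ is a different Gaussian sequence, a linear filter of $\mathbf G$, and its covariance $\rho_Y$ need not be in $\ell^d(\Z)$ a priori. My approach avoids Theorem \ref{thm_NN20} and bounds increments directly. Since $R_n=V_n-M_n$, the standard Breuer--Major variance computation for $V_n$ (only $\rho_G\in\ell^d$ is used) and the martingale orthogonality for $M_n$ give $\E|R_n(t)-R_n(s)|^2\lesssim (\lfloor nt\rfloor-\lfloor ns\rfloor)/n$. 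To get the extra integrability needed for a Kolmogorov--Billingsley criterion, I would truncate. Split $T_s\varphi$ into a Hermite polynomial of degree at most $N$ plus a tail. The polynomial part is tight by hypercontractivity on a finite chaos combined with the $L^2$ increment bound, which gives $p$-th moment increment bounds with $p>2$. The tail contributes $\E\sup_t|R^{>N}_n(t)|^2$, which should be made small uniformly in $n$. I expect this to follow from a maximal inequality applied to the same decomposition iterated on the tail; alternatively, interpolating the $L^p$ bound with the small $L^2$ tail bound should give a moment bound of order $>2$ and size $o(1)$. Once both pieces are tight, $V_n$ is tight, and combining this with Theorem \ref{thm_BM} yields $V_n\LRA\s W$.
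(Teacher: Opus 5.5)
\textbf{There is a genuine gap in Step 3: you never control the tail $R_n^{>N}$, so tightness of $R_n$ is not established.}

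Steps 1 and 2 agree with the paper's decomposition and its identification of the predictable summands as an Ornstein--Uhlenbeck smoothing $P_\tau\varphi(Y_k)$. (I write $P_\tau$ for your $T_s$ to avoid a clash with the time variable $s$.) One correction in Step 1: non-determinism does not force $\mu$ to be absolutely continuous, because the Wold decomposition may contain a deterministic component with singular spectrum. Ergodicity follows instead from $\rho_G\in\ell^d(\Z)$, which gives $\rho_G(k)\to0$.

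The degree-$\le N$ part of your Step 3 is fine. The $V_n-M_n$ trick applies to $\varphi^{\le N}$, and finite-chaos hypercontractivity upgrades the $L^2$ bound, but only with a constant of order $(p-1)^{N/2}$. So everything rests on $\lim_N\limsup_n\PP(\sup_t|R_n^{>N}(t)|>\eps)=0$. For that you only have $\|R_n^{>N}(t)-R_n^{>N}(s)\|_2^2\lesssim\|\varphi^{>N}\|^2_{L^2(\g)}(\lfloor nt\rfloor-\lfloor ns\rfloor)/n$. This sits exactly at the borderline exponent $1$ of Billingsley-type moment criteria. Second moments alone give only $\E\sup_t|R_n^{>N}(t)|^2\lesssim\|\varphi^{>N}\|^2_{L^2(\g)}\log^2 n$ by chaining, which is not uniform in $n$. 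Getting past $L^2$ is the whole difficulty of the theorem.

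Neither of your proposed fixes closes this.
\begin{itemize}
\item If the $L^p$ bound you want to interpolate is the marginal one, $\|P_\tau\varphi(Y_0)\|_p\le\|\varphi\|_{L^2(\g)}$, then Minkowski only gives $L^p$ increments of order $(\lfloor nt\rfloor-\lfloor ns\rfloor)/\sqrt n$. Interpolating that with the $L^2$ bound never produces a moment exponent above $1$.
\item If you mean an increment bound of order $((\lfloor nt\rfloor-\lfloor ns\rfloor)/n)^{1/2}$ in some $L^p$ with $p>2$, that is precisely what has to be proved. Once you have it for $R_n$, no truncation is needed.
\item Iterating the decomposition on the tail handles the new martingale pieces by Doob's inequality, but leaves a multi-step predictable remainder with exactly the same problem.
\end{itemize}

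\textbf{The missing idea is the one you set aside as an obstacle.} For $d\ge2$ one does have $\rho_Y\in\ell^d(\Z)$. Let $\e_k:=G_k-\E[G_k\mid\F_{k-1}]$, which are i.i.d.\ $\cN(0,\nu^2)$, so that $G_k=\sqrt{1-\nu^2}\,Y_k+\e_k$. Then $(1-\nu^2)\rho_Y(h)=\rho_G(h)-\E[G_0\e_{-h}]$ for $h>0$. Bessel's inequality for the orthonormal family $\{\e_i/\nu\}_{i\in\Z}$ gives $\sum_i|\E[G_0\e_i]|^2\le\nu^2$. Hence $(1-\nu^2)\rho_Y-\rho_G\in\ell^2(\Z)\subset\ell^d(\Z)$; this is Lemma \ref{lem_1}(ii).

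Next, $P_\tau$ is linear on the Gaussian space and $P_\tau[\varphi(Y_k)]=(P_\tau\varphi)(Y_k)$. So hypercontractivity can be applied to the whole block rather than to a single summand. With $I=(\lfloor ns\rfloor,\lfloor nt\rfloor]\cap\Z$ and $p=1+e^{2\tau}$,
\[
\|R_n(t)-R_n(s)\|_p=\Big\|P_\tau\Big(\frac{1}{\sqrt n}\sum_{k\in I}\varphi(Y_k)\Big)\Big\|_p\le\Big\|\frac{1}{\sqrt n}\sum_{k\in I}\varphi(Y_k)\Big\|_2 .
\]
By the standard Breuer--Major variance computation, the right-hand side is at most $\big(\frac{\lfloor nt\rfloor-\lfloor ns\rfloor}{n}\big)^{1/2}\|\varphi\|_{L^2(\g)}\|\rho_Y\|_{\ell^d}^{d/2}$. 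Lemma \ref{lem_NN20} then gives tightness directly. Alternatively, Theorem \ref{thm_NN20} now applies to $P_\tau\varphi$ along $\{Y_k\}$.

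Your identity $R_n=V_n-M_n$ cannot replace this step. It bounds $\|P_\tau F\|_2$, whereas hypercontractivity needs $\|F\|_2$. Chaos by chaos, converting one into the other costs a factor $(1-\nu^2)^{-q/2}$, which is unbounded in $q$.

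Finally, for $d=1$ Bessel only gives $\ell^2$, so one splits off $c_1G_k$. Its partial sums are Gaussian, so their $L^p$ increment bounds follow from the $L^2$ bound and $\rho_G\in\ell^1(\Z)$. The argument above then applies to the rank-$\ge2$ remainder. The pieces are recombined using $C$-tightness (Lemma \ref{tech_MN}). That lemma is also what your closing claim ``once both pieces are tight, $V_n$ is tight'' requires in the Skorokhod topology.
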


The key ingredient  in proving Theorem \ref{thm_BMD1} 
is that the one-step prediction of the Gaussian sequence 
naturally splits the summands into two pieces.  
Define
\[
\DL_k:=\E[\varphi(G_k)\mid \F_{k-1}],
\quad
\xi_k:=\varphi(G_k)-\DL_k,
\]
and for \(t\in[0,1]\),  

\noi
\begin{align}\label{def_MN}
M_n(t):=\frac1{\sqrt n}\sum_{k=1}^{\lfloor nt\rfloor}\xi_k,
\quad
{\rm and}
\quad
N_n(t):=\frac1{\sqrt n}\sum_{k=1}^{\lfloor nt\rfloor}\DL_k.
\end{align}

\noi
Then, 
$(\xi_k,\F_k)_{k\in\Z}$ is a stationary martingale difference
sequence,  and hence the tightness of the martingale part  
 falls under the martingale functional CLT
 (Proposition \ref{MCLT}).  
 The predictable part  $N_n$ is smoother: when the original sequence is non-deterministic, 
 conditioning applies a genuine Ornstein-Uhlenbeck semigroup to $\varphi$
 and then  Nelson's hypercontractivity (Proposition \ref{prop_1}) 
 upgrades this predictable part from $L^2(\O)$ to some $L^p(\O)$, $p>2$, 
 without any additional assumption on $\varphi$.  
 This is precisely the amount of integrability needed for the standard tightness criterion 
 (Lemma \ref{lem_NN20}).
 If the sequence $\mathbf{G}$ is deterministic, 
 the martingale part disappears (i.e., $M_n \equiv 0$)
 and   the smoothing mechanism is
no longer available {\it along the full sequence}.
If the sequence  $\mathbf{G}$ admits $q$-decimated subsequences (for some $q\geq 2$)
that are non-deterministic, then
tightness for the original process follows by recombining finitely many decimated subsequences.
For an integer $q\geq 2$
and
 $r\in\{0,\ldots,q-1\}$, we define the decimated Gaussian sequence
\begin{equation}\label{def_GR0}
\text{$G_j^{(r)}:=G_{qj+r},$
 $j\in\Z$}.
\end{equation}
Note that by stationarity of $\mathbf{G}$, the law of $G^{(r)}$ does not depend on $r$.

\medskip

Now we are ready to state our second main result.

\begin{theorem}[\textsf{Deterministic case}]  
\label{thm_BMD2}
{\rm (i)} 
Assume that $\nu^2 = 0$.
Suppose  $\varphi\in L^2(\g)$ has 
   Hermite rank
$d\geq 1$ as in \eqref{her_varphi}.
Assume
$
\rho_G\in\ell^d(\Z)
$
and 
\begin{align}\label{DEC}
\begin{aligned}
&\text{$\{ G_{qj}: j\in\Z\}$ is non-deterministic
for some $q\geq 2$.}
\end{aligned}
\end{align}

\noi
Then,  
$
V_n
\LRA
\s W$
 in $\mathbf{D}([0,1])$,
where $W$ and  $\s^2$ are given  as in 
Theorem \ref{thm_BM}.

\smallskip
\noi
{\rm (ii)} If \begin{align} \label{suff_tight}
\text{$k\in\Z\mapsto \rho_G(\fq k)$ is absolutely summable for some $\fq\geq 2$,}
\end{align}
then  the condition \eqref{DEC} holds for some $q$
that is divisible by $\fq$.

\end{theorem}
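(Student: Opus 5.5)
Part (i) reduces to the non-deterministic case by decimation. Finite-dimensional convergence of $V_n$ to $\s W$ already follows from Theorem \ref{thm_BM}, since $\rho_G\in\ell^d(\Z)$. So only tightness in $\mathbf{D}([0,1])$ needs to be proved.

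For tightness, write
\[
V_n(t)=\sum_{r=0}^{q-1} V_n^{(r)}(t),\qquad
V_n^{(r)}(t)=\frac1{\sqrt n}\sum_{k\le \lfloor nt\rfloor,\ k\equiv r \ (\mathrm{mod}\ q)}\varphi(G_k).
\]
Each $V_n^{(r)}(t)$ is, up to a boundary term of at most one summand, equal to $q^{-1/2}$ times the partial-sum process of the decimated sequence $G^{(r)}_j=G_{qj+r}$, evaluated at time $t$ with $\lfloor n/q\rfloor$ steps. By \eqref{DEC} and stationarity, every $G^{(r)}$ is a non-deterministic stationary Gaussian sequence with covariance $j\mapsto\rho_G(qj)$. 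This covariance lies in $\ell^d(\Z)$, because it is a subsequence of $\rho_G$.

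Rather than invoking only the conclusion of Theorem \ref{thm_BMD1}, I would use the tightness bound produced inside its proof: the martingale/predictable decomposition. The martingale part is tight by Proposition \ref{MCLT}. The predictable part gains $L^p(\O)$ integrability for some $p>2$ via hypercontractivity (Proposition \ref{prop_1}), which yields a moment bound of the form required by Lemma \ref{lem_NN20}. Applying this to each $G^{(r)}$ shows that each $V_n^{(r)}$ is C-tight.

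The boundary terms $n^{-1/2}\varphi(G_k)$ need separate control. For this, $\max_{k\le n}|\varphi(G_k)|/\sqrt n\to0$ in probability: by stationarity, a union bound and $\E[\varphi(G_0)^2\ind_{|\varphi(G_0)|>\eps\sqrt n}]\to0$. A finite sum of C-tight processes is C-tight, so $V_n$ is tight. Combined with the fidi convergence this gives $V_n\LRA\s W$.

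The main obstacle in (i) is that Theorem \ref{thm_BMD1} asserts convergence in law, not tightness in a form that transfers directly. The sum of components that each converge in law need not be tight unless each is C-tight. So I would extract from Section \ref{SEC_31} the explicit modulus-of-continuity estimate, which gives C-tightness.

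For (ii), I would use the spectral characterization. If $k\mapsto\rho_G(\fq k)$ is absolutely summable, then $\{G_{\fq j}\}$ has a continuous spectral density
\[
\fm_\fq(\lambda)=\sum_k\rho_G(\fq k)e^{ik\lambda}\ge0,
\]
but it may vanish somewhere, so Kolmogorov's formula \eqref{def_nu2} could give $\nu^2=0$. Further decimation by a factor $m$ produces the density
\[
\fm_{\fq m}(\lambda)=\frac1m\sum_{l=0}^{m-1}\fm_\fq\Big(\frac{\lambda+2\pi l}{m}\Big),
\]
which averages $\fm_\fq$ over $m$ equally spaced points. Since $\fm_\fq$ is continuous and not identically zero (its integral equals $2\pi\rho_G(0)>0$), it is positive on some interval of length $\delta$. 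Choosing $m>2\pi/\delta$ guarantees that for every $\lambda$ at least one of the points $(\lambda+2\pi l)/m$ lies in that interval. Hence $\fm_{\fq m}$ is continuous and strictly positive on the compact set $[-\pi,\pi]$, so it is bounded below. Then $\log\fm_{\fq m}$ is integrable and $\nu^2>0$ by \eqref{def_nu2}, so \eqref{DEC} holds with $q=\fq m$.

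The only care needed in (ii) is to normalize the density and the $2\pi$ factors consistently with the Herglotz convention.
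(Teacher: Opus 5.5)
Your proposal is correct and follows the paper's proof. For (i), you decimate into the $q$ residue classes, apply the non-deterministic result to each class, and recombine via $C$-tightness, with the boundary terms controlled by $\max_{k}|\varphi(G_k)|/\sqrt n\to 0$. For (ii), you use the aliasing formula so that a grid of mesh $2\pi/m$ hits an arc where the continuous density of $\{G_{\fq j}\}$ is positive, and then apply Kolmogorov's formula.

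The one unnecessary detour is going inside the proof of Theorem~\ref{thm_BMD1}. Its limit $\sigma W$ has continuous paths, so convergence in law already is $C$-tightness in the sense of Definition~\ref{C_tight}, and the paper uses the theorem's conclusion exactly this way. Your fixed step count $\lfloor n/q\rfloor$, in place of the paper's time change $\theta_{n,r}$, is equally valid bookkeeping.
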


We present the proof  of Theorem \ref{thm_BMD2} in Section \ref{SEC_32}.
As one will see in the proof of part (ii),
    under the mild summability condition \eqref{suff_tight} along an arithmetic subsequence, 
   a sufficiently large decimation has a spectral density bounded away from zero, 
   and is therefore non-deterministic by Kolmogorov's prediction formula \eqref{def_nu2};
   see also Remark \ref{rem_ali} for more discussions.
The assumption $\nu^2=0$ in Theorem~\ref{thm_BMD2} is used only to emphasize
that the result addresses the deterministic case; the proof of part (i) only
uses the existence of a non-deterministic arithmetic decimation.
 See also Remark \ref{rem_DEC} for further discussions.

Note that the  condition \eqref{suff_tight} holds whenever $\rho_G\in\ell^1(\Z)$.
That is, we have the following functional Breuer-Major theorem
without the moment assumption \eqref{p>2}.

\begin{corollary} \label{cor_FBM}
Suppose $\rho_G\in\ell^1(\Z)$ and $\varphi\in L^2(\g)$ has 
 Hermite rank $d\geq 1$ as in \eqref{her_varphi}.
Then $V_n \LRA  \s W$ in $\mathbf{D}([0,1])$, 
where $W$ and  $\s^2$ are given  as in 
Theorem \ref{thm_BM}.
\end{corollary}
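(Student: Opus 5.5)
The corollary will follow by combining the two main theorems, splitting according to whether $\mathbf G$ is non-deterministic. First, since $|\rho_G(k)|\le\rho_G(0)=1$ and $d\ge1$, we have $|\rho_G(k)|^d\le|\rho_G(k)|$. Hence $\rho_G\in\ell^1(\Z)$ implies $\rho_G\in\ell^d(\Z)$, and the assumptions of Theorem~\ref{thm_BM} hold.

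\emph{Case $\nu^2>0$.} Theorem~\ref{thm_BMD1} applies directly and yields $V_n\LRA\s W$ in $\mathbf D([0,1])$.

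\emph{Case $\nu^2=0$.} We check the summability condition \eqref{suff_tight} with $\fq=2$. The map $k\mapsto\rho_G(2k)$ is a subsequence of the absolutely summable sequence $\rho_G$, so
\[
\sum_{k\in\Z}|\rho_G(2k)|\le\sum_{k\in\Z}|\rho_G(k)|<\infty .
\]
Theorem~\ref{thm_BMD2}(ii) then provides some $q$, a multiple of $2$, such that $\{G_{qj}\}_{j\in\Z}$ is non-deterministic. This is condition \eqref{DEC}. Since $\rho_G\in\ell^d(\Z)$ and $\varphi\in L^2(\g)$ has Hermite rank $d$, Theorem~\ref{thm_BMD2}(i) gives $V_n\LRA\s W$, with the same $\s^2$ as in \eqref{def_s}.

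In fact, the remark after Theorem~\ref{thm_BMD2} says that part (i) uses only the existence of a non-deterministic decimation. So the case split can be avoided: Theorem~\ref{thm_BMD2} applies whenever $\rho_G\in\ell^1$. We keep the split only to align with the stated hypotheses.

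None of these steps is a real obstacle. The only points to check are the inequality $|\rho_G|\le1$, which gives $\ell^1\subset\ell^d$, and that a subsequence of an absolutely summable sequence is absolutely summable. All the substantive work is contained in Theorems~\ref{thm_BMD1} and~\ref{thm_BMD2}.
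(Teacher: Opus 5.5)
Your argument is correct and is essentially the paper's own: the paper obtains Corollary~\ref{cor_FBM} by noting that $\rho_G\in\ell^1(\Z)$ gives \eqref{suff_tight}, and $\ell^1(\Z)\subset\ell^d(\Z)$ because $|\rho_G|\le 1$. Theorem~\ref{thm_BMD1} then covers $\nu^2>0$ and Theorem~\ref{thm_BMD2} covers $\nu^2=0$; your explicit case split and choice $\fq=2$ simply spell out that one-line deduction.
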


\begin{remark}\rm
(i) The condition $\rho_G\in \ell^1(\Z)$ is not necessary 
for \eqref{suff_tight}
as illustrated by Example~\ref{exam1}. However, the weaker
condition $\rho_G\in \ell^2(\Z)$ alone does not ensure the
non-determinism of all decimated sequences. Indeed, in
Example~\ref{exam2}, condition~\eqref{DEC} fails: for every $q\geq 2$,
\begin{align}
G_0 \in L^2\big(\Omega,\s\{G_{qj}:j\leq -1\}, \PP\big). \label{DEC_fail}
\end{align}

%
\noi
The construction in Example~\ref{exam2}, and similar examples,
replies on  a rather singular spectral structure
and therefore suggests that the failure of condition~\eqref{DEC}
may be confined to  pathological situations.  
Nevertheless, it demonstrates
that $\ell^2$-summability alone is insufficient to guarantee
non-determinism under decimation.

\smallskip
\noi
(ii)
Note that the condition \eqref{DEC_fail} (i.e., the failure of \eqref{DEC})
means that  the process remains deterministic after every
regular thinning of time. Equivalently, for each $q\ge2$, the decimated
sequence $\{G_{qj}:j\in\mathbb Z\}$ has no innovation: its present value
is already measurable with respect to its own strict past,
and the randomness of the original sequence is not created locally along any
arithmetic time scale, but     encoded globally and redundantly in each
decimated past.

\smallskip
\noi
{\rm(iii)}
The condition \eqref{suff_tight} is {\it only} a sufficient condition for
\eqref{DEC}, as illustrated by Example~\ref{not18}.  The condition \eqref{DEC}
itself appears to be the sharp assumption within the decimation strategy: if no
arithmetic decimation is non-deterministic, then this strategy has no innovation
to exploit.  Thus removing \eqref{DEC}, and hence proving the full BMD principle,
requires ideas beyond decimation.  We believe that \eqref{DEC}, much like the
moment assumption \eqref{p>2}, is an artifact of the proof rather than an
intrinsic obstruction.

\smallskip
\noi
(iv)   Chambers and Slud \cite{CS89} constructed examples of square-integrable
functionals of stationary Gaussian processes for which the CLT
holds but the functional CLT fails.  Their examples belong to
the much larger class of general Wiener-It\^o functionals
$
Y=\sum_{q\ge1}I_q(f_q)$,
where the kernels $f_q$ may depend on the full spectral variables and hence on
infinitely many coordinates of the underlying Gaussian process. 
The main object in their paper is the integral functional
$Z_T(t) = \int_0^{tT} U_s Y ds$
with $\{U_s\}_{s\in\R}$ time-shift operators. 
 In their
counterexample (\cite[pp. 328-329]{CS89}), a dyadic oscillation is inserted into the spectral kernel near
the hyperplane $x_1+\cdots+x_q=0$, so that 
the variance  ratio  $\Var( Z_T(t) ) / \Var(Z_T(1))$
stays bounded above and
below but  fails to converge to $t$.  
Thus, the
 finite-dimensional distributional convergence to a Brownian motion already failed.
This phenomenon is different from the one-coordinate Breuer-Major setting
considered here, where the summands have the form $\varphi(G_k)$ and the chaos
kernels are determined by the Hermite coefficients of $\varphi$ and the
covariance function of $\mathbf{G}$.  Under the Breuer-Major summability condition,
the variance of partial sums over $[0,nt]$ is asymptotically linear in $t$, and
the dyadic spectral oscillation mechanism of \cite{CS89} is not available in our setting. 
With this in mind, we still do not have a counterexample 
to  tightness under the finite-variance assumption. 
On the contrary, our Theorems \ref{thm_BMD1}-\ref{thm_BMD2}
suggest such a counterexample may not exist.

\smallskip
\noi
(v) Our method is tailor-made for the discrete-time setting, in which 
the martingale $M_n$ together with
the filtration $\{\F_k\}_{k\in\Z}$ enters naturally into the picture. 
It is not clear to us how we can adapt our method 
to  the functional Breuer-Major theorem for integral functionals of 
a stationary Gaussian field on $\R^n$
as in \cite{CNN20}. We leave this for  future work.

\smallskip
\noi
(vi) Let us also mention the recent preprint of Altman, Klose, 
and Perkowski \cite{AKP26}, which proves a rough-path version of the
functional Breuer-Major theorem.  They consider vector-valued stationary 
Gaussian sequences and enhance the
partial-sum process by its second-order iterated sums, obtaining convergence
in an $r$-variation rough-path topology.  Their assumptions include
Malliavin-Sobolev regularity, for instance $f_k\in\mathbb{D}^{d,4}(\g)$
in their main theorem, together with the usual Breuer-Major summability of
the cross-covariances.  Their proof is based on Malliavin-calculus and
rough-path tools, including Meyer's inequality, an integration-by-parts
reduction to finite Wiener chaos, and a rough-path tightness criterion.
It is a natural question to weaken the above assumptions by using 
the method in this work.

\end{remark}

The processes defined in \eqref{def_VN} are step functions, 
so the natural path space in the statements above is
$\mathbf{D}([0,1])$.  If one wants to establish   weak convergence on the space 
$\mathbf{C}([0,1])$ of continuous paths on $[0,1]$
equipped with uniform topology,
one can modify the definition of $V_n$ via the polygonal interpolation.

\medskip
\noi
$\bul$ {\bf Linearly interpolated processes and weak convergence in $\mathbf{C}([0,1])$.}   
%
Recall \eqref{def_VN} and set
\begin{align}\label{def_VN2}
\wt V_n(t)
:= 
V_n(t)+ \frac{nt-\lfloor nt\rfloor}{\sqrt n}   \varphi(G_{\lfloor nt\rfloor+1}) 
\end{align}
 for $0\leq t<1$
with
$\wt V_n(1)=V_n(1)$.
Then, $\wt V_n$ 
is a random element of $\mathbf{C}([0,1])$.
In the following, we present a general result
that transfers the weak convergence of $V_n$ on  $\mathbf{D}([0,1])$
to the weak convergence of $\wt V_n$ on  $\mathbf{C}([0,1])$.

\begin{proposition}
\label{prop_C}
Suppose $\varphi\in L^2(\g)$ and the  processes $V_n$ 
and $\wt V_n$ are given as in \eqref{def_VN} and \eqref{def_VN2}.
Let $\{b_n\}_{n\geq 1}$ be a deterministic sequence of real numbers 
with $\liminf_{n\to+\infty} b_n > 0$.
Let $\mathcal{V}$ be a random element of  $\mathbf{C}([0,1])$. 
Then, the following equivalence holds:
\begin{align}\label{D_cvg}
\frac{V_n}{b_n}  \LRA \mathcal{V}
\quad\text{in $\mathbf{D}([0,1])$}
\end{align}
if and only if
\begin{align}\label{C_cvg}
\frac{\wt V_n}{b_n} \LRA  \mathcal{V}
\quad\text{in $\mathbf{C}([0,1])$}.
\end{align}
Consequently, the conclusions of Theorems~\ref{thm_BMD1} and \ref{thm_BMD2}, 
and hence of Corollary \ref{cor_FBM}, 
also hold in $\mathbf{C}([0,1])$ after replacing $V_n$ by $\wt V_n$
and taking $b_n =1$.
\end{proposition}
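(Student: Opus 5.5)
The plan is to show that $\|V_n - \wt V_n\|_\infty/b_n \to 0$ in probability, and then to use standard facts about how the Skorokhod and uniform topologies interact when the limit is continuous. By definition,
\[
\sup_{t\in[0,1]} |\wt V_n(t) - V_n(t)| \le \frac{1}{\sqrt n}\max_{1\le k\le n} |\varphi(G_k)| .
\]
Stationarity together with $\varphi\in L^2(\g)$ gives the bound
\[
\PP\Big(\max_{k\le n}|\varphi(G_k)|>\e\sqrt n\Big)\le n\,\PP(|\varphi(G_0)|>\e\sqrt n)\le \e^{-2}\,\E\big[\varphi(G_0)^2\ind_{\{|\varphi(G_0)|>\e\sqrt n\}}\big].
\]
The right-hand side tends to $0$ by dominated convergence. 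This is the only place where finite variance is used, and no moment above order two is needed. Since $\liminf b_n>0$, we have $1/b_n$ bounded for large $n$. Hence $\|V_n/b_n-\wt V_n/b_n\|_\infty\to0$ in probability.

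For (D\_cvg)$\Rightarrow$(C\_cvg), we work in $\mathbf{D}([0,1])$. The Skorokhod metric $d$ is dominated by the uniform distance, so $d(V_n/b_n,\wt V_n/b_n)\to0$ in probability. By the converging-together (Slutsky) lemma in the metric space $\mathbf{D}$, we get $\wt V_n/b_n\LRA\mathcal V$ in $\mathbf D$. Both $\wt V_n/b_n$ and $\mathcal V$ lie in $\mathbf C([0,1])$. On $\mathbf C$, the Skorokhod topology relative to $\mathbf D$ coincides with the uniform topology, and $\mathbf C$ is a closed subset of $\mathbf D$. Therefore, for any bounded uniformly-continuous $f$ on $(\mathbf C,\|\cdot\|_\infty)$, we extend $f$ continuously to $\mathbf D$ (for instance by Tietze, or more simply by using the portmanteau theorem with closed sets of $\mathbf C$, which are closed in $\mathbf D$). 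This yields convergence in $\mathbf C$.

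For (C\_cvg)$\Rightarrow$(D\_cvg), the inclusion $\mathbf C\hookrightarrow\mathbf D$ is continuous, so $\wt V_n/b_n\LRA\mathcal V$ in $\mathbf D$. Slutsky again, with the same uniform estimate, gives $V_n/b_n\LRA\mathcal V$ in $\mathbf D$.

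The consequence follows by taking $b_n=1$ and $\mathcal V=\s W$, which has continuous paths, and invoking Theorems \ref{thm_BMD1}, \ref{thm_BMD2} and Corollary \ref{cor_FBM}.

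The main point requiring care is the topological step: that closed sets of $(\mathbf C,\|\cdot\|_\infty)$ are closed in $\mathbf D$. This holds because Skorokhod convergence to a continuous limit is equivalent to uniform convergence. The estimate on the maximum is routine.
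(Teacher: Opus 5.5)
Your proof is correct and follows essentially the same route as the paper. Both control the interpolation error by $\sup_{t}|\wt V_n(t)-V_n(t)|\le n^{-1/2}\max_{k\le n}|\varphi(G_k)|$, made negligible using only $\varphi\in L^2(\g)$ and stationarity, and then combine Slutsky's lemma in $\mathbf D([0,1])$ with the fact that Skorokhod convergence to a continuous limit is uniform. The differences are minor. You get convergence in probability by a union bound rather than the paper's $L^2$ truncation bound \eqref{lim_max}. For \eqref{D_cvg}$\Rightarrow$\eqref{C_cvg}, you apply Slutsky directly and pass to $\mathbf C([0,1])$ via closedness of $\mathbf C$ in $\mathbf D$ and the portmanteau theorem, whereas the paper goes through $C$-tightness (Lemma \ref{tech_MN}), f.d.d.\ convergence, and the Skorokhod representation theorem.
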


In the paper \cite{NN20}, the  functional Breuer-Major theorem   
for $\wt{V}_n$
in  $\mathbf{C}([0,1])$ was proved in  Theorem 1.2 therein,
with a proof parallel to that on $\mathbf{D}([0,1])$.
Instead, we provide a much more general equivalence principle.
Such an equivalence principle is standard.  For the reader's convenience, we
include a short proof in Section~\ref{SEC_22}.

\subsection{Application for the  critical fractional-Gaussian-noise}

Our method also applies to the critical normalization in Theorem 5.1 of \cite{NN20},
which assumed \eqref{p>2}. 

 In the notation of the present paper, 
 the limiting variance in this critical regime depends only on 
 the leading Hermite coefficient.

\begin{theorem}
\label{thm_FGN}
Let $\varphi\in L^2(\g)$ have Hermite rank $d\geq1$ 
as in \eqref{her_varphi}.  
Put $H=1-\frac1{2d}$ and let
\[
X_k=B^H_{k+1}-B^H_k,\quad k\in\Z,
\]
be the fractional Gaussian noise associated with a fractional Brownian motion $B^H$. 
That is, 
\[
\E[X_0X_k]=\rho_H(k)
=\frac12\Big(|k+1|^{2H}+|k-1|^{2H}-2|k|^{2H}\Big).
\]
Define
$
Y_n(t):=\frac{1}{\sqrt{n}}\sum_{k=1}^{\lfloor nt\rfloor}\varphi(X_k)
$ for $t\in[0,1]$.
Then,
$
\frac{Y_n}{\sqrt{\log n}}\LRA \s_\ast W$
in  $\mathbf{D}([0,1])$,
where $W$ is a standard Brownian motion and
\begin{align}\label{sstar}
\s_\ast^2
=2d!\,c_d^2\left(\frac{(2d-1)(d-1)}{2d^2}\right)^d.
\end{align}

\noi
The same conclusion holds in $\mathbf{C}([0,1])$ for
the linearly interpolated version of $\frac{Y_n}{\sqrt{\log n}}$
by taking $b_n = \sqrt{\log n}$ in Proposition \ref{prop_C}.
We postpone the proof   to  Section~\ref{SEC_34}.

\end{theorem}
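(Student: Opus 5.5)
Write $\varphi = c_d H_d + \psi$ with $\psi := \sum_{q\ge d+1} c_q H_q$, so that $Y_n = Y_n^{(d)} + Y_n^{\psi}$. The plan is to treat the two pieces separately. The critical piece is the leading chaos $Y_n^{(d)}(t) = n^{-1/2}\sum_{k\le \lfloor nt\rfloor} c_d H_d(X_k)$. The remainder $\psi$ has Hermite rank at least $d+1$, and this is where the finite-variance difficulty lives.

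\emph{Step 1: the leading chaos.} Since $H=1-\frac1{2d}$, we have $\rho_H(k)\sim H(2H-1)|k|^{2H-2}$ and $H(2H-1) = \frac{(2d-1)(d-1)}{2d^2}$. Hence $\rho_H(k)^d \sim \big(\frac{(2d-1)(d-1)}{2d^2}\big)^d |k|^{-1}$. Consequently
\[
\Var\big(Y_n^{(d)}(t)\big)\sim \s_\ast^2\, t\log n ,
\]
and more precisely the covariance of increments over disjoint intervals is $o(\log n)$. Convergence of finite-dimensional distributions of $Y^{(d)}_n/\sqrt{\log n}$ then follows from the fourth moment theorem within the fixed $d$-th chaos, by checking that the contractions vanish, as in \cite[Theorem 5.1]{NN20}. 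That computation does not involve any moment assumption on $\varphi$. Tightness of this piece follows from hypercontractivity in a fixed chaos: for $s<t$,
\[
\E\big|Y^{(d)}_n(t)-Y^{(d)}_n(s)\big|^4 \le C\Big(\frac{\lfloor nt\rfloor-\lfloor ns\rfloor}{n}\Big)^2\log^2 n,
\]
up to a harmless $\log$ correction on short intervals. This feeds into Lemma \ref{lem_NN20}. Alternatively, the polynomial $H_d$ lies in every $L^p$, so Theorem \ref{thm_NN20}'s method applies verbatim.

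\emph{Step 2: the higher-rank remainder.} Because $\rho_H\in\ell^{d+1}(\Z)$ and $\psi$ has rank at least $d+1$, Theorem \ref{thm_BM} shows that $n^{-1/2}\sum\psi(X_k)$ has bounded variance. The main claim of this step is that $Y^\psi_n$ is tight (indeed convergent to $\s_\psi W$) in $\mathbf{D}([0,1])$. Dividing by $\sqrt{\log n}\to\infty$ then gives $Y^\psi_n/\sqrt{\log n}\to 0$ in probability in sup norm. To get this tightness we apply Theorem \ref{thm_BMD1}. Fractional Gaussian noise has spectral density $\fm(\lambda)\asymp |\lambda|^{1-2H}$ near $0$, which is positive and continuous elsewhere. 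Therefore $\int\log\fm>-\infty$, and Kolmogorov's formula \eqref{def_nu2} gives $\nu^2>0$. Theorem \ref{thm_BMD1} with $\psi$ (rank $d+1$, with $\rho_H\in\ell^{d+1}$) then yields $Y^\psi_n\LRA\s_\psi W$, hence tightness. The combination is then immediate: Slutsky in $\mathbf{D}([0,1])$, with the limit continuous, shows that $Y_n/\sqrt{\log n}$ has the same limit as $Y_n^{(d)}/\sqrt{\log n}$, namely $\s_\ast W$.

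\emph{Obstacle and the $\mathbf{C}$ statement.} I expect the main care to be needed in Step 1. One must verify the exact constant in \eqref{sstar} with the factor $2$ coming from $k\in\Z$ (the two-sided sum of $|k|^{-1}$ up to $n$ gives $2\log n$). One must also ensure the tightness moment bound for the $d$-th chaos has the right form for Lemma \ref{lem_NN20} after normalization. If that lemma needs $\E|\cdot|^p\le C(\frac{j}{n})^{p/2}$ uniformly, the bound $\Var(\sum_{k=1}^{j} H_d(X_k))\lesssim j\log(j+1)\le j\log n$ is enough after dividing by $n\log n$. Hypercontractivity lifts it to $p=4$, giving exponent $2>1$. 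The $\mathbf{C}([0,1])$ conclusion follows from Proposition \ref{prop_C} with $b_n=\sqrt{\log n}$.
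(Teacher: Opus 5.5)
Your argument is correct, but it takes a different route from the paper's.

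\textbf{The paper's route.} The paper does not split $\varphi$ into chaoses. For $d\ge 2$ it applies the martingale--predictable decomposition \eqref{def_MN} to the whole of $\varphi$ at scale $\sqrt{n\log n}$.
\begin{itemize}
\item The martingale part is already tight at scale $\sqrt n$ by Proposition~\ref{MCLT}, so it vanishes after division by $\sqrt{\log n}$.
\item The predictable part $\Delta_k=P_\tau\varphi(Z_k)$ is lifted to $L^p$, $p>2$, by hypercontractivity.
\item Its second moment is controlled by the logarithmic bound $\sum_{|h|\le m}|\rho_Z(h)|^d\le C\log(e+m)$. This bound comes from writing $(1-\nu^2)\rho_Z=\rho_H-a$ with $a\in\ell^2(\Z)$.
\item The f.d.d.\ convergence for general $\varphi\in L^2(\g)$ is simply cited from \cite{BM83} and \cite[Theorem~5.1]{NN20}.
\end{itemize}

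\textbf{Your route.} You isolate the criticality in the single polynomial chaos $c_dH_d$. There all moments are available, so the critical-case argument of \cite[Theorem~5.1]{NN20} applies, or equivalently fixed-chaos hypercontractivity combined with Lemma~\ref{lem_NN20}. You then observe that the remainder $\psi$ has rank at least $d+1$ and that $\rho_H\in\ell^{d+1}(\Z)$. So $\psi$ gives a non-critical Breuer--Major functional, and Theorem~\ref{thm_BMD1} applies to it as a black box. This gives tightness at scale $\sqrt n$, hence uniform negligibility after dividing by $\sqrt{\log n}$.

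\textbf{What each approach buys.}
\begin{itemize}
\item Your decomposition makes Theorem~\ref{thm_FGN} a formal corollary of Theorem~\ref{thm_BMD1} plus the polynomial case. It never touches the covariance of the predictable sequence.
\item It shows transparently why only $c_d$ enters $\s_\ast^2$.
\item It recovers the f.d.d.\ statement for general $\varphi\in L^2(\g)$ from the single-chaos statement, instead of citing it.
\item The paper's route avoids any chaos splitting and any appeal to the critical single-chaos result for tightness. It shows that the predictable-part estimate itself absorbs the logarithmic normalization, at the cost of the extra analysis of $\rho_Z$.
\item Both routes use the non-determinism of fractional Gaussian noise in the same way, via Kolmogorov's formula.
\end{itemize}

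\textbf{A cosmetic point about $d=1$.} Here $H=\tfrac12$ and $H(2H-1)=0$, so your asymptotics $\rho_H(k)^d\sim(\cdots)|k|^{-1}$ are vacuous. They should be read as $\Var(Y^{(1)}_n(t))=O(1)=o(\log n)$. The argument still goes through, with limit $\s_\ast W\equiv 0$.
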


\begin{remark} \rm
As noted in the  proof of Theorem \ref{thm_FGN}, 
the fractional Gaussian noise is non-deterministic for any Hurst index $H\in(0,1)$;
see \eqref{anyH}.
With $\rho_H(k) \sim H(2H-1) |k|^{2H-2}$ for $H\neq 1/2$, we easily see that
$\rho_H\in\ell^d(\Z)$ if and only if $H \in (0,  1  - \frac{1}{2d}) \cup\{\frac12\}$.
 In view of Theorem \ref{thm_BMD1}, the BMD principle holds:
\[
\Big\{ \frac{1}{\sqrt{n}} \sum_{k=1}^{\lfloor nt \rfloor} \varphi( B^H_{k+1}-B^H_k)\Big\}_{t\in[0,1]} 
\LRA \s(H,\varphi) W
\]
for any $H \in (0,  1  - \frac{1}{2d}) \cup\{1/2\}$ and for any $\varphi\in L^2(\g)$
with Hermite rank $d\geq 1$ as in \eqref{her_varphi},
where $\s(H, \varphi)$ is defined as in \eqref{def_s} with $\rho_H$ in place of $\rho_G$.

\end{remark}

\noi
{\bf $\bul$ Organization of the paper.}
The rest of the paper is organized as follows.  In Section~\ref{SEC2}, we collect
the preliminary tools used throughout the proof.  More precisely, in
Section~\ref{SEC_21}, we recall the basic Gaussian analysis needed for the
Ornstein-Uhlenbeck smoothing argument, including Mehler's formula and Nelson's
hypercontractivity.  In Section~\ref{SEC_22}, we record several elementary facts
on the Skorokhod space $\mathbf D([0,1])$, $C$-tightness, 
 and deterministic time changes.  Finally, in
Section~\ref{SEC_23}, we recall a martingale functional central limit theorem
for stationary ergodic martingale difference sequences, in the form needed for
the martingale component of the decomposition \eqref{def_MN}.  The proofs of the
main results are given in Section~\ref{SEC3}, whose detailed organization is
provided at the beginning of that section.

\medskip

\noi
{\bf $\bul$ Acknowledgement.}
We warmly thank Oanh Nguyen for her hospitality during our visit to Brown
University in early June 2026, where most of this work was carried out.
P.M. is funded by Fonds de la Recherche Scientifique (FNRS)
via a FRIA grant.

\section{Preliminaries} \label{SEC2}

Let us fix some notations to be frequently used in this paper. 

\medskip

\noi
$\bul$ {\bf Notations.}
Throughout the paper, $\R_+=[0,\infty)$, $\N=\{1,2,\ldots\}$, $\Z$ denotes the set of integers,
and $\R$ denotes the real line.  We write
$
\T=[-\pi,\pi)
$
for the one-dimensional torus, 
understood modulo $2\pi$.  For $x\in\R$,
$[x]_{\T}$ denotes the unique representative of $x$ modulo $2\pi$ that belongs to
$\T$.  If $A\subset\T$ is an arc or a measurable set, then $|A|$ denotes its
Lebesgue measure, or arc length.  We write $\ind_A$ for the indicator of an event
or set $A$.
For a real-valued random variable $X$ and $p\ge1$, we set
$
\|X\|_p:=\big(\E[|X|^p]\big)^{1/p}.
$
The notation $\LRA$ means convergence in law.  If the underlying path space needs
to be emphasized, we write, for example,
$
X_n\LRA X$
  in $\mathbf D([0,1])$
or
$
X_n\LRA X$
  in $\mathbf C([0,1])$.

For path spaces, $\mathbf D([0,1])$ denotes the space of real-valued c\`adl\`ag
functions on $[0,1]$, endowed with the Skorokhod topology, and
$\mathbf C([0,1])$ denotes the space of continuous functions on $[0,1]$, endowed
with the uniform topology.  For $x\in\mathbf D([0,1])$,
\[
\|x\|_{\sup}:=\sup_{0\le t\le1}|x(t)|.
\]

\subsection{Basic Gaussian analysis} \label{SEC_21}

Recall  that $\mathbf{G} = \{ G_k\}_{k\in\Z}$
 is a centered stationary Gaussian sequence with correlation 
function $\rho_G$ such that $\rho_G(0) =  \E[ G_0^2] = 1$.
Let $\fH$ be a real separable Hilbert space isometric to the $L^2$ Gaussian Hilbert space
generated by the sequence $\mathbf{G}$. 
Then, there is an isonormal Gaussian process $\{X(h)\}_{h\in\fH}$ 
(i.e., centered Gaussian family with covariance    
$\E[ X(h) X(\phi) ] = \langle h, \phi\rangle_\fH$
for any $h, \phi\in\fH$) 
and a sequence of unit vectors $\{ e_k\}_{k\in\Z}$ such that 
$
G_k = X(e_k)$,
$k\in\Z.$
See, e.g., Proposition 7.2.3 in \cite{NP12}. 
Starting from this isonormal framework, one can build the Malliavin calculus
and we refer interested readers to the books \cite{Nua06, NP12}
for more details. For our purpose, we only need 
the following  hypercontractivity inequality due to E. Nelson. 
Let us first define the Ornstein-Uhlenbeck semigroup
$\{P_\tau\}_{\tau\in\R_+}$.
Suppose $F =  f( G_{i_1}, ..., G_{i_m})\in L^2(\Omega)$ for some deterministic measurable 
function $f:\R^m \to \R$
and $i_j\in\Z$,
we define $P_\tau F$ via  Mehler's formula

\noi
\begin{align}\label{Ptau}
P_\tau F = \E\big[  f( G^\tau_{i_1}, ..., G^\tau_{i_m}) | \mathbf{G} \big], 
\qquad
\end{align}

\noi
where $G_j^\tau = e^{-\tau} G_j + \sqrt{1- e^{-2\tau}} G_j' $ with $\{ G'_j\}_{j\in\Z}$ an independent
copy of $\mathbf{G}$. 
Then, by a density argument, 
one can extend the above definition \eqref{Ptau} 
to $L^2(\Omega, \s\{\mathbf{G}\}, \PP)$.
See, e.g., \cite[Theorem 2.8.2]{NP12}.
%
%
 Let us record some basic facts below.

\begin{proposition}[\textsf{Nelson's hypercontractivity}]  \label{prop_1}
Given any $F\in  L^2(\Omega, \s\{\mathbf{G}\}, \PP)$
and $\tau\in(0,\infty)$, we have
$
\| P_\tau F \|_p \leq  \| F \|_2
$
with $p = 1 + e^{2\tau} \in (2, \infty)$. See, e.g., \cite[Theorem 2.8.12]{NP12}.

\end{proposition}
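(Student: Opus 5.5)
The argument reduces, by Mehler's formula \eqref{Ptau}, to the one-dimensional Gaussian space and then invokes the classical Nelson inequality there.

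\emph{Step 1: reduction to finitely many Gaussians.} Let $F = f(G_{i_1},\dots,G_{i_m})$. Choose an orthonormal basis $Z_1,\dots,Z_r$ of the Gaussian span of $G_{i_1},\dots,G_{i_m}$, so that $(G_{i_1},\dots,G_{i_m}) = A Z$ for a deterministic matrix $A$, with $Z$ standard Gaussian on $\R^r$. Applying the same linear change of variables to the independent copy $\mathbf{G}'$ shows that \eqref{Ptau} becomes the Mehler formula on $(\R^r,\g^{\otimes r})$:
\[
P_\tau F=(T_\tau g)(Z),\qquad g(z)=f(Az),\qquad T_\tau g(z)=\int g\big(e^{-\tau}z+\sqrt{1-e^{-2\tau}}\,y\big)\,\g^{\otimes r}(dy).
\]
It therefore suffices to prove $\|T_\tau g\|_{L^p(\g^{\otimes r})}\le\|g\|_{L^2(\g^{\otimes r})}$ for $p=1+e^{2\tau}$.

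\emph{Step 2: the finite-dimensional inequality.} This is Nelson's theorem. In dimension one I would follow Gross's route: prove the two-point log-Sobolev inequality, pass to the Gaussian by the central limit theorem, and then differentiate $t\mapsto\|T_t g\|_{q(t)}$ with $q(t)=1+e^{2t}$ to obtain monotonicity. The $r$-dimensional case follows because $T_\tau$ is a tensor product of one-dimensional semigroups; one applies Minkowski's integral inequality coordinate by coordinate, using $p\ge2$. This step is the main analytic content. Since the paper cites \cite[Theorem 2.8.12]{NP12}, I would simply import it.

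\emph{Step 3: extension by density.} Cylindrical functionals $F=f(G_{i_1},\dots,G_{i_m})$ are dense in $L^2(\Omega,\s\{\mathbf{G}\},\PP)$. $P_\tau$ is an $L^2$-contraction (Jensen applied to \eqref{Ptau}), so if $F_k\to F$ in $L^2$ then $P_\tau F_k\to P_\tau F$ in $L^2$. Passing to an almost surely convergent subsequence, Fatou's lemma gives
\[
\|P_\tau F\|_p\le\liminf_k\|P_\tau F_k\|_p\le\lim_k\|F_k\|_2=\|F\|_2 .
\]

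The main obstacle is Step 2, the sharp constant in the exponent, which requires the log-Sobolev/semigroup argument. Steps 1 and 3 only check that the abstract definition of $P_\tau$ matches the classical finite-dimensional one.
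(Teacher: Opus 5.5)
Your proposal is correct and is essentially the paper's approach. The paper gives no argument beyond citing \cite[Theorem 2.8.12]{NP12}, which is already stated in the isonormal framework $G_k=X(e_k)$. Your Steps 1 and 3 (identifying \eqref{Ptau} with the finite-dimensional Mehler semigroup, then extending by $L^2$-contractivity and Fatou's lemma) only make explicit what that citation covers. Your Step 2 sketch (Gross's log-Sobolev route, with tensorization via Minkowski's inequality using $p\ge 2$) is the standard proof of the imported theorem.
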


\begin{lemma} \label{lem_1}
Let $\{ G_k \}_{k\in\Z}$ be a centered stationary Gaussian sequence 
with $\rho_G(0)=1$. Let $\F_k = \s\{ G_j: j\leq k \}$,
 $Y_k = \E[ G_k | \F_{k-1}]$, and $\e_k = G_k - Y_k$ for $k\in\Z$.
Then the following results hold. 

\smallskip

\noi
{\rm (i)}  $\{ Y_k \}_{k\in\Z}$ is a  centered stationary Gaussian sequence 
with $\rho_Y(0)  = 1 -\nu^2$ with $\nu = \| G_0 - Y_0\|_2$.
Moreover, $\e_k$, $k\in\Z$, are i.i.d. $\cN(0, \nu^2)$.
 
\smallskip

\noi
{\rm (ii)}  If $\rho_G\in\ell^d(\Z)$ for some $d\geq 2$, then $\rho_Y\in\ell^d(\Z)$ as well. 

\smallskip

\noi
{\rm (iii)} Suppose $\nu^2 < 1$ and define 
$
Z_k:=\frac{Y_k}{\sqrt{1-\nu^2}}.
$
Then, 
for each $k\in\Z$,
\begin{equation}\label{PtauZk}
\qquad\qquad\qquad
\E[\varphi(G_k)\mid\F_{k-1}]
=
P_\tau\varphi(Z_k)
\quad\text{with  
$e^{-\tau}=\sqrt{1-\nu^2}
$.}
\end{equation}

 \end{lemma}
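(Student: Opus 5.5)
Part (i) follows from Gaussian Hilbert space geometry. Since $Y_k$ is the orthogonal projection of $G_k$ onto the closed linear span of $\{G_j: j\le k-1\}$ (for Gaussian families, conditional expectation equals linear projection), $Y_k$ is an $L^2$-limit of finite linear combinations of the $G_j$. Hence $\{Y_k\}$ is jointly Gaussian with $\{G_k\}$ and centered. Stationarity follows from shift invariance of the law of $\mathbf{G}$: the shift maps the span of the past of $G_k$ to that of $G_{k+1}$. Pythagoras gives $\rho_Y(0)=1-\nu^2$. For the innovations, $\e_k\perp \F_{k-1}$, and $\e_j$ is $\F_{k-1}$-measurable for $j<k$. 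So the $\e_k$ are pairwise orthogonal, and since they are jointly Gaussian, they are independent. Each is $\cN(0,\nu^2)$ by stationarity.

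For (ii), write $G_k=Y_k+\e_k$ with $\e_k$ independent of $Y_k$, though not of $Y_{k+m}$. Compute, for $m\ge1$,
\[
\rho_G(m)=\E[G_{k+m}G_k]=\E[Y_{k+m}G_k]+\E[\e_{k+m}G_k]=\E[Y_{k+m}G_k],
\]
since $G_k\in\F_{k+m-1}$. Also
\[
\E[Y_{k+m}G_k]=\E[Y_{k+m}Y_k]+\E[Y_{k+m}\e_k].
\]
The term $\E[Y_{k+m}\e_k]=\E[G_{k+m}\e_k]=:a_m$ is the Wold coefficient, with $\sum_m a_m^2\le 1$. So $\rho_Y(m)=\rho_G(m)-a_m$, and we need $a\in\ell^d$. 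This holds because $a\in\ell^2\subset\ell^d$ for $d\ge2$, which is exactly why the hypothesis is $d\ge2$. Negative $m$ follow by symmetry. Then $\ell^d$ is closed under differences, so $\rho_Y\in\ell^d$. The key facts are the orthonormal expansion of $G_{k+m}$ along the innovations $\e_k/\nu$, giving Bessel's inequality, and, when $\nu=0$, $a\equiv0$ and $\rho_Y=\rho_G$.

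For (iii), $Y_k$ is $\F_{k-1}$-measurable and $\e_k$ is independent of $\F_{k-1}$. So
\[
\E[\varphi(G_k)\mid\F_{k-1}]=\psi(Y_k),\qquad \psi(y)=\E[\varphi(y+\nu N)],
\]
with $N\sim\cN(0,1)$ (the freezing lemma). Write $y=\sqrt{1-\nu^2}\,z=e^{-\tau}z$ and $\nu=\sqrt{1-e^{-2\tau}}$. Then $\psi(e^{-\tau}z)=\E[\varphi(e^{-\tau}z+\sqrt{1-e^{-2\tau}}N)]=P_\tau\varphi(z)$, which is Mehler's formula \eqref{Ptau} in one variable. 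This gives \eqref{PtauZk} with $Z_k\sim\cN(0,1)$.

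The only technical care is justifying the freezing step for $\varphi\in L^2(\g)$ rather than bounded $\varphi$. I would first prove it for bounded measurable $\varphi$ and then pass to the limit in $L^2$. Both sides are $L^2$-contractions in $\varphi$: the left side because conditional expectation is a contraction and $G_k\sim\g$, the right side because $Z_k\sim\g$ and $P_\tau$ contracts $L^2(\g)$. The main obstacle is the bookkeeping in (ii), especially confirming that the decomposition of $\rho_Y$ has the right sign conventions; the rest is routine.
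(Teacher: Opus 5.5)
Your proposal is correct and follows essentially the same route as the paper. In (i) you use the Gaussian projection and innovation structure; in (ii) you use the identity $\rho_Y(m)=\rho_G(m)-\E[G_m\e_0]$ together with Bessel's inequality for the orthonormal system $\{\e_j/\nu\}$, applied to the fixed vector $G_0$ after a stationarity shift; and in (iii) you use the freezing lemma combined with the one-variable Mehler formula. The density step you add in (iii) is harmless but unnecessary, since the freezing lemma applies directly to any measurable $\varphi$ with $\varphi(G_k)\in L^1(\O)$.
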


\begin{proof} First it is clear that $\{ Y_k, \e_k\}_{k\in\Z}$ is a centered Gaussian family,
and its stationarity follows immediately from that of $\{ G_k \}_{k\in\Z}$. 
Note that  for $k>j$, $\e_k$ is independent of $\F_{k-1}$ with $\F_j \subset \F_{k-1}$,
then $\e_k$, $k\in\Z$, are i.i.d. Gaussian random variables with mean zero.  
Finally, it is easy to see from stationarity that $\E[ \e_k^2] = \nu^2$.

Next, we prove part (ii).  Using part (i), we can first write
\begin{align} \label{seqa}
\begin{aligned}
\rho_Y(k-m) & = \E[ (G_k - \e_k) ( G_m - \e_m)] 
 = \rho_G(k-m)  -  \E[ G_0 \e_{m-k}] 
\end{aligned}
\end{align}
for $k > m$.  It is clear that   $\E[ G_0 \eps_{i}]$ is square-summable in $i\in\Z$:
the case $\nu = 0$ is trivial while for $\nu^2 > 0$, 
we deduce from Bessel's inequality that
\begin{align}
\sum_{i\in\Z}  |\E[ G_0 \eps_i /\nu ] |^2 \leq  \E[ G_0^2] =1. \label{seqb}
\end{align}
Therefore, due to $\ell^2(\Z)\subset \ell^d(\Z)$ (for $d\geq 2$), 
we deduce immediately that $\rho_Y\in\ell^d(\Z)$.

 Finally, the equation \eqref{PtauZk} follows 
 from part (i) and Mehler formula \eqref{Ptau}
 with 
\[
\E[\varphi(G_k)\mid\F_{k-1}]
=
\int_{\R}
\varphi \big(\sqrt{1-\nu^2}\,Z_k+\nu z\big)\g(dz)
=
P_\tau\varphi(Z_k).
\]

Hence, the proof of Lemma \ref{lem_1} is completed. 
\qedhere
\end{proof}

\subsection{Basic results on tightness} \label{SEC_22}

Let us recall from \cite[Chapter 3]{Bill99} some basics on the space $\mathbf{D}([0,1])$.
Let $\Lambda$ denote the set of 
all strictly increasing continuous bijections
$\lambda:[0,1]\to[0,1]$ satisfying
$
( \lambda(0),  \lambda(1) )=(0,1).$
One standard metric generating the Skorokhod topology   is
\begin{equation}\label{def_dJ1}
\textsf{dist}   (x,y)
:=
\inf_{\lambda\in\Lambda}
\left\{
\|\lambda-\Id\|_{\rm sup}
\vee
\|x-y\circ\lambda\|_{\rm sup}
\right\}.
\end{equation}

\noi
Taking the identity time change $\lambda=\Id$ in \eqref{def_dJ1}, 
we obtain
\begin{align}\label{dist_sup}
\textsf{dist}(x,y)
\leq
\|x-y\|_{\rm sup}.
\end{align}
Let us first recall Billingsley's tightness criterion.
For $x\in\mathbf{D}([0,1])$
and $\dl>0$, define $\|x\|_{\rm sup} = \sup\{ |x(t)|: t\in[0,1] \}$ and
\[
w_x'(\delta)
:=
\inf_{\Pi}
\max_{1\leq i\leq v}
\sup_{s,t\in I_i}
|x(s)-x(t)|,
\]
where the infimum is taken over all  {\it admissible} partitions
$
\Pi = \{ 0=t_0<t_1<\cdots<t_v=1\}
$
such that
$
\inf\{ t_i-t_{i-1}:  1\leq i\leq v\} >\delta
$
and where
$
I_i=[t_{i-1},t_i)
$
for $1\leq i\leq v-1$
while
$
I_v=[t_{v-1},t_v].
$
By Billingsley's criterion (\cite[Theorem 13.2]{Bill99}), 
the tightness of processes  $\{X_n\}\subset\mathbf{D}([0,1])$ 
is equivalent to 
\begin{align}\label{cond_tight}
\begin{cases}
{\rm(i)}\,\, \, {\displaystyle 
\lim_{A\to\infty}
\limsup_{n\to\infty}
 \PP\big(\|X_n\|_{\rm sup}>A\big)=0
 }
  \\[0.5em]
  {\rm and}   
  \\[0.5em]
{\rm (ii)}\,\,\, {\displaystyle  
\lim_{\dl\downarrow0}
\limsup_{n\to\infty}
\PP\big(w_{X_n}'(\delta)>\eps\big)
=0, \,\, \forall \eps > 0.}
\end{cases}
\end{align}

\noi
Note that the addition map is not continuous everywhere in the Skorokhod  topology.
In this paper, we will often need to sum  finitely many tight processes in 
$\mathbf{D}([0,1])$ to reach the desired tightness in $\mathbf{D}([0,1])$.
The lack of continuity of the addition map will not be an issue 
when the processes are $C$-tight, in particular when the limiting process
is continuous. 
Recall also from \cite[p. 124]{Bill99} that 
\begin{align} \label{D2C}
\text{the  Skorokhod convergence to a 
continuous path implies the uniform convergence.
}
\end{align}

\begin{definition}[$C$-tightness] \label{C_tight}
Let $\{X_n\}_{n\ge1}$ be a sequence of random elements in
$\mathbf D([0,1])$, endowed with the Skorokhod  topology.  
We say that
$\{X_n\}_{n\ge1}$ is $C$-tight if
every subsequence of $\{X_n\}$ has a further subsequence converging
in law in $\mathbf D([0,1])$ to a process with continuous paths.
\end{definition}
 
 The above discussion leads to the following lemma,
 which will be used, for example, in the proof of   Proposition \ref{prop_tight_N}.
 
 \begin{lemma} \label{tech_MN}
 Suppose $\{M_n\}_{n\geq 1}, \{N_n\}_{n\geq 1}\subset \mathbf D([0,1])$ are $C$-tight.
 Then, $\{M_n + N_n\}$ is $C$-tight, and in particular tight in $\mathbf D([0,1])$.
 \end{lemma}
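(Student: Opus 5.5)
The plan is to pass to subsequences and use Prokhorov's theorem, with the Skorokhod representation supplying almost sure convergence. Fix any subsequence $\{n'\}$.

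\emph{Extracting a common limit.} Since $\{M_n\}$ is $C$-tight, there is a further subsequence along which $M_{n''}$ converges in law in $\mathbf D([0,1])$ to a process $M$ with continuous paths. By $C$-tightness of $\{N_n\}$, a further subsequence gives $N_{n'''}\LRA N$ with $N$ continuous. Each marginal sequence is then tight, so the pairs $(M_{n'''},N_{n'''})$ form a tight family in the product space $\mathbf D([0,1])\times\mathbf D([0,1])$, which is Polish with the product Skorokhod topology. Passing to one more subsequence, the pair converges in law to some $(M,N)$, and both components have continuous paths, since their marginal laws are those of the continuous limits above.

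\emph{Almost sure convergence and continuity of addition.} By Skorokhod's representation theorem there are copies $(\hat M_k,\hat N_k)\to(\hat M,\hat N)$ almost surely in the product topology, with $\hat M,\hat N$ continuous. By \eqref{D2C}, Skorokhod convergence to a continuous path implies uniform convergence, so $\|\hat M_k-\hat M\|_{\rm sup}\to0$ and $\|\hat N_k-\hat N\|_{\rm sup}\to0$ almost surely. Hence $\|(\hat M_k+\hat N_k)-(\hat M+\hat N)\|_{\rm sup}\to0$, and by \eqref{dist_sup} this gives Skorokhod convergence. Therefore $M_{n}+N_{n}\LRA M+N$ in $\mathbf D([0,1])$ along the extracted subsequence, and $M+N$ has continuous paths. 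Equivalently, addition is continuous at every pair $(x,y)$ of continuous paths, so the continuous mapping theorem applies directly, with no need for the representation.

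\emph{Conclusion.} Since the subsequence was arbitrary, $\{M_n+N_n\}$ is $C$-tight in the sense of Definition \ref{C_tight}. Every subsequence then has a further subsequence converging in law in the Polish space $\mathbf D([0,1])$, so $\{M_n+N_n\}$ is relatively compact, and by Prokhorov's theorem it is tight.

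The only delicate step is extracting a \emph{joint} limit: the marginals of $(M_n,N_n)$ may be dependent, so we need tightness of the pair. This follows because the product of tight marginal families is tight in the product space. Note that we do not claim the joint law is determined, only that some limit exists along a subsequence.
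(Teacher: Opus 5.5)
Your proof is correct and follows essentially the same route as the paper: joint tightness of $(M_n,N_n)$ in $\mathbf D([0,1])\times\mathbf D([0,1])$, extraction of a jointly convergent subsequence with continuous limits, Skorokhod representation, and the upgrade to uniform convergence via \eqref{D2C}, so that the sums converge to a continuous limit. Your extra remarks are valid and only make explicit what the paper leaves implicit: the continuous mapping theorem applies directly because addition is continuous at pairs of continuous paths, and Prokhorov's theorem gives the ``in particular tight'' conclusion.
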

 
 \begin{proof} Since $\{M_n\}$ and  $\{N_n\} $ are tight in 
  $\mathbf D([0,1])$, then the sequence of joint laws of  $(M_n, N_n)$ 
  is tight in  $\mathbf D([0,1]) \times  \mathbf D([0,1])$.
 Then, any subsequence of  $\{(M_n, N_n)\}_{n\geq 1}$ 
 admits a further subsequence $\{(M_{n_k}, N_{n_k})\}_{n_k\geq 1}$ 
 that weakly converges to some limit $(M_\infty, N_\infty) \in \mathbf C([0,1])\times \mathbf C([0,1])$.
    By   the Skorokhod representation theorem
(\cite[Theorem~6.7, p.~70]{Bill99}), we can construct processes 
$(M'_{n_k}, N'_{n_k}),  (M'_\infty, N'_\infty)\in \mathbf D([0,1])\times \mathbf D([0,1])$ 
such that $(M_{n_k}, N_{n_k}) = (M'_{n_k}, N'_{n_k})$ in law,
$(M'_\infty, N'_\infty) = (M_\infty, N_\infty)$ in law,
and 
\[
(M'_{n_k}, N'_{n_k})  \xrightarrow[\rm almost \, surely]{n_k\to+\infty} (M'_\infty, N'_\infty) 
\in \mathbf C([0,1])\times \mathbf C([0,1])
\]
with respect to the   Skorokhod  topology.   Then, in view of the fact \eqref{D2C}, 
the above almost sure convergence also takes place with respect to the uniform topology,
and thus  $M'_{n_k} + N'_{n_k}$ has an almost sure limit in  $\mathbf C([0,1])$.
In other words, $\{M_{n_k} + N_{n_k}\}_{n_k}$ weakly  converges  in $\mathbf{D}([0,1])$
to a process with continuous paths. 
This proves the $C$-tightness of $\{ M_n + N_n\}$.
\qedhere

 \end{proof}
 
We are  now ready  to prove 
Proposition \ref{prop_C}.

\begin{proof}[Proof of Proposition \ref{prop_C}]
Since $\liminf_{n\to+\infty} b_n > 0$, we can find $n_0\in\N$ and $\eps_0 > 0$ such that 
$b_n \geq \eps_0$ for any $n\geq n_0$.
By construction of $\wt V_n$, we have for any $n\geq n_0$,
\[
\| \tfrac{1}{b_n} (\wt V_n-V_n)  \|_{\rm sup}
\leq
 \frac{1}{\eps_0 \sqrt n}\max_{1\leq k\leq n}|\varphi(G_k)|,
\]
which converges to zero in  $L^2(\O)$
because of\footnote{To prove \eqref{lim_max}, it is enough to begin with
the truncation: for any $A > 0$,
$\frac{1}{N} \E  \max_{1\leq j \leq N} \varphi(G_j)^2  
\leq \frac{A}{N} + 
\frac{1}{N} \sum_{j=1}^N \E[ 
 \varphi(G_j)^2 \ind_{\{  |\varphi(G_j)|^2 > A \} } ]
= \frac{A}{N} + \E[ 
 \varphi(G_0)^2 \ind_{\{  |\varphi(G_0)|^2 > A \} } ]$
by stationarity.
Then,  sending $N\to+\infty$ first, then $A\to+\infty$,
yields \eqref{lim_max}. }

\noi 
\begin{align}\label{lim_max}
\frac{1}{N} \E\Big( \max_{1\leq j \leq N} \varphi(G_j)^2 \Big) 
\xrightarrow{N\to+\infty} 0.
\end{align}

\noi
That is, we have
\begin{align}\label{diff_P}
\| \tfrac{1}{b_n} (\wt V_n-V_n) \|_{\rm sup}
\xrightarrow{n\to+\infty} 0
\quad\text{in $L^2(\O)$.}
\end{align}

\noi
Consequently, we deduce from \eqref{diff_P} and \eqref{cond_tight}
 that 
  $\{\tfrac{1}{b_n} (\wt V_n-V_n)\}_{n\geq 1}$ is $C$-tight,
and its only possible subsequential limit is the zero process.
  Then, we can deduce from Lemma \ref{tech_MN}
  that
  \begin{center}
   $\{\wt V_n / b_n \}_{n\geq 1}$ is $C$-tight
  if and only if    $\{ V_n / b_n \}_{n\geq 1}$ is $C$-tight.
  \end{center}
  
  Now assume \eqref{D_cvg}. Then, we deduce from 
   \eqref{diff_P}  that 
 the f.d.d. convergence of $\wt V_n / b_n  \LRA \mathcal{V}$,
  and thus
%
%

\noi
\begin{align}\label{D2_cvg}
\wt V_n / b_n  \LRA \mathcal{V}
\quad\text{in }\mathbf{D}([0,1]).
\end{align}

\noi
Therefore,  in view of \eqref{D2C},
we can    upgrade \eqref{D2_cvg} 
to  the convergence  \eqref{C_cvg}
in the continuous path space.\footnote{ To be more precise, 
by the Skorokhod representation theorem
\cite[Theorem~6.7, p.~70]{Bill99}, after passing to a new probability space if
necessary, we may construct random elements $Z_n$ and $Z_\infty$ such that
$Z_n =\wt V_n/b_n$ in law and
$Z_\infty = \mathcal V$ in law, and such that
$Z_n\to Z_\infty$ almost surely with respect to the Skorokhod topology.  Since
$Z_\infty$ has continuous paths, \eqref{D2C} implies that this convergence is in
fact uniform.  As $Z_n$ has continuous paths almost surely, this yields
\eqref{C_cvg}.
}

For the converse direction, assume \eqref{C_cvg}.  Since the embedding
$\mathbf C([0,1])\hookrightarrow \mathbf D([0,1])$ is continuous, we also have
$
\wt{V}_n/b_n\LRA \mathcal{V}$
in $\mathbf{D}([0,1]).$
Moreover, by \eqref{dist_sup} and \eqref{diff_P},
\[
\textsf{dist}(\tfrac{V_n}{b_n},\tfrac{\wt V_n}{b_n} )
\le
 \|
\tfrac{V_n-\wt V_n}{b_n}\|_{\rm sup}
\longrightarrow0
\quad\text{in probability}.
\]
Slutsky's theorem therefore yields \eqref{D_cvg}.
\qedhere

\end{proof}

\begin{lemma}\label{lem_3a}
Fix an integer $q\geq 2$ and $r\in\{0,1,..., q-1\}$. Define
\[
\Theta_n :=
\#\{1\le k\le n:\ k = r \, ({\rm mod}\, q)\}.
\]
For all $n \geq q$ {\rm(}i.e., $\Theta_n>0${\rm)}, define
\[
\theta_{n}(t)
:=
\frac{
\#\{1\le k\le\lfloor nt\rfloor:\ k = r \, ({\rm mod}\, q)\}}{\Theta_n
},
\,\,   t\in[0, 1].
\]
Then
$
\tfrac{\Theta_n}{n}
\xrightarrow{n\to+\infty}
\tfrac1q.
$
Moreover, for all  $n\geq q$, the map
$
\theta_n:[0,1]\to[0,1]
$
is nondecreasing with
$
\big(\theta_n(0),\theta_n(1)\big)=(0,1)
$
such that
$
\sup\big\{  
\left|
\theta_n(t)-t
\right|:  t\in[0,1]  \big\}
\xrightarrow{n\to+\infty}
0.
$
\end{lemma}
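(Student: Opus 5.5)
The plan is to reduce everything to a single elementary counting estimate. For an integer $m\geq 0$ write
\[
c(m):=\#\{1\le k\le m:\ k = r \ ({\rm mod}\ q)\}.
\]
The positive integers congruent to $r$ modulo $q$ form an arithmetic progression with step $q$. Among any $q$ consecutive integers exactly one lies in this progression, so the count satisfies
\[
\frac{m}{q}-1\le c(m)\le \frac{m}{q}+1 \qquad \text{for all } m\ge 0.
\]
To see this, split $\{1,\dots,m\}$ into $\lfloor m/q\rfloor$ full blocks of length $q$ plus a remainder block of length less than $q$. Each full block contributes exactly one element. The remainder block contributes at most one.

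The definitions give $\Theta_n=c(n)$ and $\theta_n(t)=c(\lfloor nt\rfloor)/c(n)$. The bound above yields $|\Theta_n/n-1/q|\le 1/n\to0$, which is the first claim. For $n\ge q$, each block $\{1,\dots,q\}$ contains one admissible index (interpreting $r=0$ as $k=q$), so $\Theta_n\geq 1$ and $\theta_n$ is well defined.

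The structural properties of $\theta_n$ follow directly from these formulas:
\begin{itemize}
\item Monotonicity: $t\mapsto\lfloor nt\rfloor$ is nondecreasing and $c$ is nondecreasing, so $\theta_n$ is nondecreasing.
\item Endpoints: $\lfloor 0\rfloor=0$ gives $\theta_n(0)=0$, and $\lfloor n\rfloor=n$ gives $\theta_n(1)=c(n)/c(n)=1$.
\end{itemize}

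For the uniform convergence, write, uniformly in $t\in[0,1]$,
\[
|\theta_n(t)-t|\le \Big|\frac{c(\lfloor nt\rfloor)}{\Theta_n}-\frac{\lfloor nt\rfloor}{q\Theta_n}\Big|
+\frac{\lfloor nt\rfloor}{n}\Big|\frac{n}{q\Theta_n}-1\Big|+\Big|\frac{\lfloor nt\rfloor}{n}-t\Big|.
\]
The three terms are bounded as follows:
\begin{itemize}
\item The first term is at most $1/\Theta_n$, by the counting bound.
\item The second term is at most $|n/(q\Theta_n)-1|$, which tends to $0$ by the first claim.
\item The third term is at most $1/n$.
\end{itemize}
Since $\Theta_n\sim n/q\to\infty$, the supremum tends to $0$.

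None of these steps is a real obstacle. The only care needed is the edge case $r=0$, where the admissible indices are the multiples of $q$. In that case $c(m)=\lfloor m/q\rfloor$, and the same bounds apply. Note also that $\theta_n$ is only nondecreasing, not strictly increasing or continuous, so it is not an element of $\Lambda$. The lemma does not claim that it is; presumably later arguments will use it as a deterministic time change $x\mapsto x\circ\theta_n$ composed with processes converging to continuous limits.
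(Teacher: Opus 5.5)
Your proof is correct and follows essentially the same route as the paper: the same counting bound $|c(m)-m/q|\le 1$, followed by a three-term decomposition of $\theta_n(t)-t$ into the counting error, the normalization error $n/q-\Theta_n$, and the floor error $\lfloor nt\rfloor-nt$. The only difference is cosmetic: the paper groups these terms to get the explicit bound $(2+1/q)/\Theta_n$, and your bound gives the same order once you note that $|n/(q\Theta_n)-1|\le 1/\Theta_n$.
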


\begin{proof}
For any integer  $m\geq 0$, set
$
A_m
:=
\#\{1\le k\le m:\ k\equiv r \, ({\rm mod}\, q)\}.
$
Then, we have 
\[
\Theta_n=A_n
\quad
{\rm and}
\quad
\theta_n(t)
=
\frac{A_{\lfloor nt\rfloor}}{A_n}.
\]
We can write 
$
m=\ell q+a
$
for some 
$\ell\in\N\cup\{0\}$
and $0\le a\le q-1$.
Then, we have 
$
A_m=\ell+\varepsilon_m
$
with 
$ \eps_m\in\{0,1\}$.
Therefore, 
$\big|  A_m-\frac{m}{q} \big| 
= \big| \eps_m - \frac{a}{q} \big| \leq 1.
$
In particular, 
$\big|
\Theta_n-\frac{n}{q} \big|
=
\big|
A_n-\frac{n}{q} \big|
\leq 1$,
yielding 
$
\tfrac{\Theta_n}{n} 
\to
\tfrac1q
$
as $n\to+\infty$.

It remains to prove the uniform convergence $\theta_n(t) \to t$,
 while the rest is straightforward. 
Fix $t\in[0,1]$ and put
$
m=\lfloor nt\rfloor
$
(so that $0\leq nt - m \leq 1$).
Then, we write (for $n\geq q$)
\begin{align*}
\theta_n(t) - t
&= \tfrac{1}{A_n} \big( A_m -  tA_n) 
=  \tfrac{1}{A_n} \big( A_m - \tfrac{m}{q} + \tfrac{m-nt}{q} + t  ( \tfrac{n}{q} -  A_n ) \big), 
\end{align*}

\noi
which is  bounded by $\frac{1}{A_n} ( 2 + \frac{1}{q} )$. 
Therefore,   the desired uniform convergence 
follows   from  $A_n = \Theta_n$
and $
\tfrac{\Theta_n}{n}
\to
\tfrac1q.
$
\end{proof}

  \begin{lemma}\label{lem_3b}
  Let the assumptions of Lemma \ref{lem_3a} prevail. 
  Assume $\{U_n\}_{n\geq 1}$ is  $C$-tight in $\mathbf{D}([0,1])$. 
  Then,  
  $\{U_{\Theta_n} (\theta_n(t)): t\in[0,1]  \}_{n\geq q}$ 
  is a $C$-tight family as well.
 
 \end{lemma}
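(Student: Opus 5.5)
Since $\Theta_n\to\infty$ and $\Theta_n/n\to 1/q$ by Lemma \ref{lem_3a}, the subsequence $\{U_{\Theta_n}\}_{n\ge q}$ is itself $C$-tight. Each integer $m$ equals $\Theta_n$ for at most $q+1$ values of $n$, so any subsequence $\{U_{\Theta_{n_k}}\}$ either has $\Theta_{n_k}$ taking some value infinitely often, or has a further subsequence along which $\Theta_{n_k}$ is strictly increasing. The first case is trivial: $\Theta_n\to\infty$ excludes it for large $k$, or it is handled by noting that a fixed law is tight. In the second case we have a genuine subsequence of $\{U_m\}$. This case reduction is the only bookkeeping needed.

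Now write $X_n(t):=U_{\Theta_n}(\theta_n(t))=(U_{\Theta_n}\circ\theta_n)(t)$. Because $\theta_n$ is nondecreasing, right-continuous and piecewise constant with $(\theta_n(0),\theta_n(1))=(0,1)$, $X_n$ has c\`adl\`ag paths. The plan is to show that whenever $U_{\Theta_{n_k}}\LRA U_\infty$ with $U_\infty$ continuous, we also get $X_{n_k}\LRA U_\infty$ in $\mathbf D([0,1])$. By the Skorokhod representation theorem (\cite[Theorem~6.7]{Bill99}), we may assume $U_{\Theta_{n_k}}\to U_\infty$ almost surely in the Skorokhod topology. By \eqref{D2C}, this upgrades to $\|U_{\Theta_{n_k}}-U_\infty\|_{\rm sup}\to0$ almost surely. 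We then bound
\[
\|U_{\Theta_{n_k}}\circ\theta_{n_k}-U_\infty\|_{\rm sup}
\le \|U_{\Theta_{n_k}}-U_\infty\|_{\rm sup}+\sup_{t\in[0,1]}|U_\infty(\theta_{n_k}(t))-U_\infty(t)|.
\]
The first term tends to $0$ almost surely. For the second term, $U_\infty$ is uniformly continuous on $[0,1]$ and $\|\theta_{n_k}-\Id\|_{\rm sup}\to0$ by Lemma \ref{lem_3a}, so it also tends to $0$ almost surely. Uniform convergence implies Skorokhod convergence by \eqref{dist_sup}, so $X_{n_k}\to U_\infty$ almost surely in $\mathbf D([0,1])$, hence in law, and the limit is continuous.

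Putting these together, every subsequence of $\{X_n\}_{n\ge q}$ has a further subsequence converging in law to a process with continuous paths. This is exactly $C$-tightness in the sense of Definition \ref{C_tight}. The one subtle point is that the time change $\theta_n$ is not a homeomorphism, so it is not in $\Lambda$ and we cannot argue by invariance of the Skorokhod metric. Passing through uniform convergence via \eqref{D2C} is what avoids this, and it is precisely where continuity of the limit is used.
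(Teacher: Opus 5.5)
Your argument is correct and is essentially the paper's proof. Both extract a subsequence of $\{U_{\Theta_n}\}$ converging in law to a continuous limit, pass to an almost surely convergent Skorokhod representation, upgrade to uniform convergence via \eqref{D2C}, and conclude using $\|\theta_n-\Id\|_{\rm sup}\to0$ together with uniform continuity of the limit. Your extra bookkeeping makes explicit points the paper passes over silently: each value of $\Theta_n$ is attained only finitely often, so your ``first case'' cannot actually occur and subsequences of $\{U_{\Theta_n}\}$ reduce to genuine subsequences of $\{U_m\}$; and $U_{\Theta_n}\circ\theta_n$ is c\`adl\`ag.
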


\begin{proof}

Set $X_n:=U_{\Theta_n}$
and
$Y_n(t):=U_{\Theta_n}(\theta_n(t))=X_n(\theta_n(t))$
for $t\in[0,1]$.
Since $\{U_n\}_{n\geq1}$ is  $C$-tight in $\mathbf D([0,1])$, the subfamily
$
\{X_n\}_{n\geq q}
=
\{U_{\Theta_n}\}_{n\geq q}
$
is also  $C$-tight in $\mathbf D([0,1])$,
meaning any subsequence of 
$\{X_n\}_{n\geq q}$
admits a further subsequence, denoted by $\{Z_n\}$,
such that $Z_n \LRA Z_\infty$ in $\mathbf D([0,1])$
with $Z_\infty\in \mathbf C([0,1])$.
Then, by        Skorokhod representation theorem
(\cite[Theorem~6.7, p.~70]{Bill99}), we can construct processes 
$\{Z'_n\}_{n\leq \infty}$ on some probability space
such that $Z_n = Z_n'$ in law for $n\leq \infty$
and $Z_n' \to Z'_\infty$ with respect to the Skorokhod topology.
Then, in view of \eqref{D2C},
we have $\| Z'_n - Z'_\infty\|_{\rm sup} \to 0$ almost surely
with $Z'_\infty\in \mathbf{C}([0,1])$. 
Due to the uniform convergence of $\theta_n(t) \to t$,
we have $\| Z'_n\circ \theta_n - Z'_\infty\|_{\rm sup} \to 0$.
This proves that any subsequence of $\{Y_n\}$
admits a further subsequence that weakly converges 
in  $\mathbf{D}([0,1])$ to a process with continuous paths. 
Hence, the proof of Lemma \ref{lem_3b} is completed. 
\qedhere
 
\end{proof}

\subsection{Martingale CLTs} \label{SEC_23}

In this section,   we present a well-known result
on functional CLT for stationary ergodic martingale differences.
We say a stationary sequence $\{X_k\}_{k\in\Z}$ is ergodic
if there are no nontrivial shift-invariant events.\footnote{Given a sequence 
$x = \{x_k: k\in\Z\}$, we define the shift operator $T$ by setting $(Tx)_k = x_{k+1}$.
We say a sequence $X$   is stationary if  
$TX$ has the same law as $X$.  We say such a stationary sequence is ergodic
if $\PP(A) \in\{0,1\}$ for any event  $A\in \s\{ X_k : k\in\Z\}$ with 
$T^{-1}A = A$. See, e.g., \cite[Chapter 25]{Kal21}
for more details.}
It is a well-known fact that
a  stationary Gaussian sequence $\mathbf{G}$ is ergodic 
if and only if  its correlation function $\rho_G$ satisfies 
$\frac{1}{n}\sum_{k=1}^n \rho_G(k)^2 \to 0$
as $n\to+\infty$. 
In particular,
$\mathbf{G}$ is ergodic if 
 $\rho_G(k)\to 0$ as $k\to+\infty$ (i.e., when it is mixing).
See, e.g., the classic paper  \cite{Mar49} by G. Maruyama. 
It is also known that the stationarity and ergodicity are 
preserved under measurable transformation and conditioning:

\noi
\begin{align} \label{MEP}
\begin{aligned}
&\text{Let $\{X_k\}_{k\in\Z}$ be a stationary ergodic sequence
with natural filtration $\{ \F^X_k\}_{k\in\Z}$,}\\
&\text{then, $\mathbf{Y}=\{ \E[ \varphi(X_k)| \F^X_{k-1}] \}_{k\in\Z}$
is also stationary and ergodic}
\end{aligned}
\end{align}
for any measurable function $\varphi:\R\to\R$
with $\varphi(X_0)\in L^1(\Omega)$.\footnote{Indeed, 
we first write $\varphi(X_k) = \varphi(X_0)\circ T^k$
and $ \F^X_{k-1} = T^{-k}  \F^X_{-1}$,
with $T$ the shift on the underlying probability space,
then we can write 
$
Y_k = \E[ \varphi(X_0)\circ T^k |  T^{-k} \F^X_{-1} ] = Y_0 \circ T^k,
$
which proves the stationarity of $\{ Y_k\}_{k\in\Z}$.
Note that 
any shift-invariant event of $\{Y_k\}_{k\in\Z}$ is also a shift-invariant
event of  $\{X_k\}_{k\in\Z}$, and therefore is trivial. This proves the 
desired ergodicity. 
}

\begin{proposition}[\textsf{Functional CLT for stationary martingale differences}]
\label{MCLT}
Let 
$(\xi_k,\F_k)_{k\in\Z}$
be a stationary ergodic martingale
difference sequence such that
\[
\s_0:= \sqrt{\E[\xi_0^2] }<\infty.
\]
Define
$
M_n(t)
:=
\frac1{\sqrt n}
\sum_{k=1}^{\lfloor nt\rfloor}\xi_k,$
 $t\in[0,1].$
Then,
$
M_n
\Rightarrow
\s_0 W$
in $\mathbf{D}([0,1])$,
where 
$\mathbf D([0,1])$
is endowed with the Skorokhod  topology
and $W$ is a standard Brownian motion. 
In particular, $\{ M_n\}_{n\geq 1}$ is $C$-tight.

\end{proposition}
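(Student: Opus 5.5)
The plan is to deduce Proposition \ref{MCLT} from the classical Billingsley--Ibragimov functional CLT for stationary ergodic martingale differences (see \cite[Theorem 18.3]{Bill99}, or \cite[Theorem 4.1 and Corollary 3.1]{HH80}). Rather than reprove that result, I would reduce it to the general martingale functional CLT with two conditions: convergence of the conditional (or quadratic) variance and a conditional Lindeberg condition.

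First, I would handle the degenerate case $\s_0=0$. Then $\xi_k=0$ almost surely for every $k$, so $M_n\equiv0$ and the statement is trivial. For $\s_0>0$, I would check the two hypotheses of the martingale FCLT for the triangular array $\xi_{n,k}:=\xi_k/\sqrt n$, $1\le k\le n$, adapted to $\F_k$.

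\textbf{Quadratic variation.} Stationarity and ergodicity of $\{\xi_k\}$ transfer to $\{\xi_k^2\}$, since $\xi_k^2$ is a measurable shift of $\xi_0^2$ and shift-invariant events of the image sequence are invariant for the original one, exactly as in the footnote to \eqref{MEP}. Birkhoff's ergodic theorem then gives, for each fixed $t$,
\[
\frac1n\sum_{k=1}^{\lfloor nt\rfloor}\xi_k^2=\frac{\lfloor nt\rfloor}{n}\cdot\frac{1}{\lfloor nt\rfloor}\sum_{k=1}^{\lfloor nt\rfloor}\xi_k^2\longrightarrow t\,\s_0^2
\]
almost surely (and in $L^1$).

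\textbf{Lindeberg condition.} For $\eps>0$, stationarity gives
\[
\E\Big[\frac1n\sum_{k=1}^n\xi_k^2\ind_{\{|\xi_k|>\eps\sqrt n\}}\Big]=\E\big[\xi_0^2\ind_{\{|\xi_0|>\eps\sqrt n\}}\big]\to0
\]
by dominated convergence, using only $\xi_0\in L^2$. This is the same truncation as in \eqref{lim_max}, and it also yields $\max_{k\le n}|\xi_k|/\sqrt n\to0$ in $L^2$.

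With both conditions in hand, the martingale FCLT (e.g.\ \cite[Theorem 18.2]{Bill99}, or \cite[Theorem 4.1]{HH80}) gives $M_n\Rightarrow\s_0W$ in $\mathbf D([0,1])$. $C$-tightness then follows directly. Every subsequence of $M_n$ converges in law to $\s_0W$, whose paths are continuous, so Definition \ref{C_tight} is satisfied.

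The step needing the most care is matching the hypotheses of the cited theorem. Some versions require convergence of the conditional variances $\frac1n\sum_k\E[\xi_k^2\mid\F_{k-1}]$ instead of the raw sums. That sequence is also stationary ergodic by \eqref{MEP}, with mean $\s_0^2$, so Birkhoff's theorem applies to it in the same way. Other versions use the sum of squares together with the maximal condition, which the Lindeberg bound above already supplies. Either way, no moment beyond the second is ever needed.
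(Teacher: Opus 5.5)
Your proposal is correct and follows essentially the same route as the paper. Both reduce the statement to a general martingale functional CLT and verify its hypotheses in two steps: the quadratic-variation condition comes from Birkhoff's ergodic theorem, and negligibility of jumps comes from the same stationarity-plus-truncation argument. For the jumps you use a Lindeberg condition, while the paper bounds $J_T(M_n)$ and $J_T(\langle M_n\rangle)$.

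The one real difference is where you apply Birkhoff. Your main route uses the raw squares $\xi_k^2$, while the paper uses the conditional variances $V_k=\E[\xi_k^2\mid\F_{k-1}]$. Your choice needs only the ergodicity of $\{\xi_k\}$ itself. Invoking \eqref{MEP} for $V_k$, as the paper and your fallback remark both do, tacitly assumes that $\F_k$ is generated by an ergodic sequence; this does hold in the paper's application.
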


For the sake of completeness,
we give a   proof below, 
including the verification of the jump conditions
needed for weak convergence on the  space
$\mathbf D([0,1])$.
We refer interested readers to the survey \cite{Whitt07}
for more on martingale functional CLTs.

\begin{proof}[Proof of Proposition  \ref{MCLT}]
Let us first define a filtration $\mathbb{F}^{(n)} = \{\F_t^{(n)}\}_{t\in\R_+}$
with
$\F_t^{(n)}=
\F_{\lfloor nt\rfloor}.
$
Then, $M_n$
 is a square-integrable c\`adl\`ag martingale
with respect to the filtration  $\mathbb{F}^{(n)}$. 
Next, we set
$
V_k
:=
\E[\xi_k^2| \F_{k-1}]
$
for $k\in\Z.$
Then, in view of the fact \eqref{MEP},
the sequence $\{V_k\}_{k\in\Z}$
 is stationary, ergodic, and
integrable
with
$
\E[V_0]
=
\E[\xi_0^2]
=
\s_0^2.
$
Thus, it follows from
 Birkhoff's ergodic theorem
 that
 \[
\frac1m\sum_{k=1}^mV_k
\xrightarrow[\text{almost surely}]{m\to\infty}
\s_0^2.
\]
See, e.g., \cite[Theorem 25.6]{Kal21}.
Therefore for every  $t\in\R_+$,
\begin{equation*}   
\langle M_n\rangle_t
:=
\frac1n
\sum_{k=1}^{\lfloor nt\rfloor}V_k
\xrightarrow[\text{almost surely}]{n\to\infty}
\s_0^2 \, t.
\end{equation*}

\noi
In fact, the convergence is uniform on compact time intervals. 
Fix $T>0$  and put
\[
R_m
:=
\sum_{k=1}^m(V_k-\s_0^2)
\]
for $m\geq 1$.
Since
$
\frac{R_m}{m} \to 0$
almost surely, 
there exists
$m_0=m_0(\omega,\e)$
 for almost every $\omega$ and every $\e>0$
 such that
$
|R_m|\le \e m$
for all  $m\ge m_0$,
from which we get
\[
\frac1n
\max_{1\le m\le\lfloor nT\rfloor}|R_m|
\le
\frac1n\max_{1\le m<m_0}|R_m|
+
\e T,
\]
with the first term convergent to zero as $n\to+\infty$.
 Letting \(\e\downarrow0\) gives us
\[
\frac1n
\max_{1\le m\le\lfloor nT\rfloor}|R_m|
\xrightarrow[\text{almost surely}]{n\to\infty}
0.
\]

\noi
It follows that
\begin{equation}\label{eq2_MCLT}
\sup_{0\le t\le T}
\left|
\langle M_n\rangle_t-\s_0^2t
\right|
\le
\frac1n
\max_{1\le m\le\lfloor nT\rfloor}|R_m|
+
\frac{\s_0^2}{n}
\xrightarrow[\text{almost surely}]{n\to\infty}
0.
\end{equation}

In view of \cite[Theorem 2.1-(ii)]{Whitt07},
it remains to 
    verify that the jumps of 
$M_n$ and its quadratic variation
$\langle M_n\rangle$ are asymptotically negligible.
%
 Let\footnote{Here, $\Delta x(t) = x(t) - x(t-)$ denotes
 the jump size at time $t$, where $x(t-) = \lim_{s \uparrow t} x(s)$
 is the left limit.}
\[
J_T(M_n)
:=
\sup_{0<t\le T}|\Delta M_n(t)|.
\]
Since
$
\Delta M_n(k/n)=\frac{\xi_k}{\sqrt n},
$
we have
$
J_T(M_n)
=
\frac1{\sqrt n}
\max_{1\le k\le\lfloor nT\rfloor}|\xi_k|.
$
For any fixed $a>0$,
 the elementary inequality
\[
\frac1n
\max_{1\le k\le\lfloor nT\rfloor}\xi_k^2
\le
a^2
+
\frac1n
\sum_{k=1}^{\lfloor nT\rfloor}
\xi_k^2
\ind_{\{|\xi_k|>a\sqrt n\,\}}
\]
gives, by stationarity,
$
\limsup_{n\to+\infty}\E[J_T(M_n)^2]
\leq
a^2.$
Letting \(a\downarrow0\) yields
\begin{equation}\label{eq3_MCLT}
\E[J_T(M_n)^2]
\xrightarrow{n\to+\infty} 
0.
\end{equation}

\noi
Similarly, with
$
\Delta\langle M_n\rangle_{k/n}
=
\frac{V_k}{n},
$
we have
$
J_T(\langle M_n\rangle)
:=
\sup_{0<t\le T}
\Delta\langle M_n\rangle_t
=
\frac1n
\max_{1\le k\le\lfloor nT\rfloor}V_k,
$
and with 
\[
\frac{1}{n} \max_{1\le k\le\lfloor nT\rfloor}V_k
\leq a + \frac{1}{n} \sum_{k=1}^{\lfloor nT\rfloor} V_k \ind_{\{ V_k > an \}}
\]
we have $\limsup_{n\to+\infty} \E\big[ J_T(\langle M_n\rangle) \big] \leq a$
for any $a > 0$.
Therefore, 
\begin{equation}\label{eq4_MCLT}
\E[J_T(\langle M_n\rangle)]
\xrightarrow{n\to+\infty} 0.
\end{equation}
Finally, to  conclude the proof of Proposition \ref{MCLT},
it suffices to apply 
\cite[Theorem 2.1-(ii)]{Whitt07}
with \eqref{eq2_MCLT},
 \eqref{eq3_MCLT},
 and
  \eqref{eq4_MCLT}.
\qedhere
\end{proof}

\section{Proofs of main results} \label{SEC3}

In Section~\ref{SEC_31}, we prove Theorem~\ref{thm_BMD1}, the BMD principle in
the non-deterministic case, by combining the martingale functional CLT with the
Ornstein-Uhlenbeck smoothing of the predictable remainder.  In
Section~\ref{SEC_32}, we first prove Theorem~\ref{thm_BMD2}-(i), the deterministic
case, by decomposing the original partial-sum process into finitely many
decimated subsequences and then applying the non-deterministic result to each
residue class; next we prove
Theorem~\ref{thm_BMD2}-(ii) and give examples illustrating the role and
limitations of the decimation strategy.  Finally, in Section~\ref{SEC_34}, we
prove the critical fractional-Gaussian-noise result,
Theorem~\ref{thm_FGN}.

\subsection{Proof of Theorem \ref{thm_BMD1} (non-deterministic case)} \label{SEC_31}

Let us first recall the decomposition from  \eqref{def_MN}.
Let $\F_k = \s\{ G_j : j\leq k\}$ and 
\[
\Delta_k = \E[ \varphi(G_k) | \F_{k-1} ] 
\quad{\rm and}
\quad
\xi_k = \varphi(G_k) - \Delta_k.
\]
It is immediate to see that 
$(\xi_k,\F_k)_{k\in\Z}$ is a  square-integrable 
martingale difference sequence. 
Moreover, it is a stationary ergodic sequence,
which follows from the fact \eqref{MEP},
and the ergodicity of $\mathbf{G}$ (due to  $\rho_G(k)\to 0$ as $k\to+\infty$).
 Then, we deduce from Proposition \ref{MCLT}
that  $\{M_n\}_{n\geq 1}$ is tight in $\mathbf{D}([0,1])$ and 
\[
M_n \Rightarrow  \mathfrak{a}  W
\quad\text{in  $\mathbf{D}([0,1])$}
\]
with $W$ a standard real Brownian motion
and $\mathfrak{a} = \sqrt{\E[ \xi_0^2  ]}. $\footnote{Using 
the Hermite expansion
\eqref{her_varphi} and \eqref{PtauZk},
it is not difficult to verify that
$\E[\xi_0^2] = \sum_{q\geq d} c_q^2 q! (1 - (1-\nu^2)^q )$.
} 
In particular, $\{ M_n\}_{n\geq 1}$ is $C$-tight. 

\bigskip

Now let us deal with the process $N_n$. 
For its tightness, we need the following   criterion 
from \cite[Lemma 3.1]{NN20}.

\begin{lemma}[Tightness criterion] \label{lem_NN20}
Let $(Y_n)_{n\geq 1}$ be a sequence of random elements 
in $\mathbf{D}([0,1])$. 
Suppose that there is some $p > 2$ and $C \in(0, \infty)$
such that 

\noi
\begin{align}\label{bdd_NN20}
\| Y_n(t) - Y_n(s) \|_{p} \leq C \Big(  \frac{  \lfloor nt \rfloor - \lfloor ns \rfloor}{n} \Big)^{\frac12}
\end{align}
for any $0 \leq s \leq t \leq 1$
and for any $n\geq 1$.
Then,  $\{Y_n\}_{n\geq 1}$ is tight
in $\mathbf{D}([0,1])$. 
\end{lemma}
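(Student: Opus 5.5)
We will prove Lemma~\ref{lem_NN20} by checking the two conditions in Billingsley's criterion \eqref{cond_tight}. The plan is to pass through a maximal inequality for partial sums, in the spirit of Billingsley's Theorem~10.2 in \cite{Bill99}.

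First we reduce to a lattice problem. Each $Y_n$ is a step process that is constant on $[k/n,(k+1)/n)$; this is implicit in \eqref{bdd_NN20}, since the bound forces $Y_n(t)=Y_n(s)$ a.s.\ whenever $\lfloor nt\rfloor=\lfloor ns\rfloor$. So write $S_k:=Y_n(k/n)$ for $0\le k\le n$. With $u_k:=1/n$, the hypothesis becomes
\[
\E|S_j-S_i|^p\le C^p\Big(\sum_{i<l\le j}u_l\Big)^{p/2}.
\]
The exponent $\alpha:=p/2>1$ is exactly the condition needed for Billingsley's maximal inequality (Theorem~10.2 in \cite{Bill99}, with $\beta=0$, i.e.\ moment form, via Markov). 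That inequality gives
\[
\PP\Big(\max_{i\le k\le j}|S_k-S_i|\ge\lambda\Big)\le \frac{K_{p}C^p}{\lambda^p}\Big(\frac{j-i}{n}\Big)^{p/2}.
\]

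Next we verify (i) of \eqref{cond_tight}. We also need control of $Y_n(0)$. Here we assume $Y_n(0)=0$, or at least tight, which holds in the intended application since $N_n(0)=0$. Applying the maximal inequality with $i=0$, $j=n$ gives $\PP(\|Y_n\|_{\sup}\ge A)\le K C^p A^{-p}$, uniformly in $n$.

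Then we verify (ii). Fix $\delta$ and cover $[0,1]$ by blocks $[m\delta',(m+1)\delta']$ with $\delta'=2\delta$, about $1/\delta'$ of them. The oscillation bound $w'_{Y_n}(\delta)\le 2\max_m \sup_{t\in\text{block}_m}|Y_n(t)-Y_n(m\delta')|$ holds, using the partition at block endpoints; for $n$ large these are admissible, and small $n$ is handled separately because finitely many processes are tight. A union bound with the maximal inequality then gives
\[
\PP(w'_{Y_n}(\delta)>\eps)\lesssim \delta^{-1}\eps^{-p}C^p(\delta+1/n)^{p/2},
\]
which is of order $\delta^{p/2-1}\to0$ uniformly for $n\ge 1/\delta$. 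The remaining cases $n<1/\delta$ are finitely many, each individually tight.

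The main obstacle is the maximal inequality step. The moment bound controls only increments between two fixed times, while \eqref{cond_tight} requires suprema. This is precisely why $p>2$ is essential: with $p=2$ the exponent $\alpha=1$ fails and the union bound would not decay. I would invoke Billingsley's Theorem~10.2 directly rather than reprove it. If a self-contained argument were wanted, I would use a dyadic chaining on the lattice $\{k/n\}$, where the geometric sum $\sum_\ell 2^{-\ell(p/2-1)}$ converges because $p>2$.
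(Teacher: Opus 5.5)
Your proof is correct, but there is no proof in the paper to match: it simply imports this criterion from \cite[Lemma 3.1]{NN20}.

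A common proof of such a criterion stays in continuous time and checks Billingsley's two-sided condition. By Cauchy--Schwarz and \eqref{bdd_NN20}, for $t_1\le t\le t_2$,
\[
\E\big[|Y_n(t)-Y_n(t_1)|^{p/2}\,|Y_n(t_2)-Y_n(t)|^{p/2}\big]\le C^p\Big(\frac{\lfloor nt_2\rfloor-\lfloor nt_1\rfloor}{n}\Big)^{p/2}.
\]
The left side vanishes when $t_2-t_1<1/n$, since one of the two increments is then zero. Otherwise the right side is at most $C^p\big(2(t_2-t_1)\big)^{p/2}$, and one concludes with \cite[Theorem 13.5]{Bill99}.

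Your route is different:
\begin{itemize}
\item you reduce to the lattice, and your almost-sure step-structure argument via countably many pairs and right-continuity is fine;
\item you apply the one-sided maximal inequality \cite[Theorem 10.2]{Bill99} to the partial sums $S_k$;
\item you verify \eqref{cond_tight} directly by a union bound over $\asymp 1/\dl$ blocks.
\end{itemize}
This is more elementary and self-contained, and it makes the role of $p>2$ visible through the rate $\dl^{p/2-1}$. The price is the extra step-structure reduction.

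You are also right that the statement, as written, needs control of $Y_n(0)$. For instance, $Y_n\equiv n$ satisfies \eqref{bdd_NN20} trivially but is not tight. This omission is harmless in the paper, because every process the lemma is applied to vanishes at $t=0$: $N_n$, $\wt N_n$, and the linear Gaussian part when $d=1$. Still, it is a genuine missing hypothesis in the stated lemma, and your added assumption is the correct fix.

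Two minor corrections:
\begin{itemize}
\item Billingsley's Theorem 10.2 is stated for tail bounds $\PP(|S_j-S_i|\ge\lambda)\le\lambda^{-4\beta}\big(\sum_{i<l\le j}u_l\big)^{2\alpha}$ with $\beta\ge0$ and $\alpha>1/2$. Your moment bound therefore enters through Markov with $4\beta=p$, $2\alpha=p/2$ and $u_l=C^2/n$, not with $\beta=0$.
\item Admissibility of your block partition has nothing to do with $n$: just merge the last short block so that all blocks have length in $(\dl,4\dl]$. Also, condition (ii) of \eqref{cond_tight} only involves $\limsup_{n}$, so the separate treatment of $n<1/\dl$ is unnecessary.
\end{itemize}
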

 
Note that in the paper \cite{NN20},
Nourdin and Nualart applied  
the above Lemma \ref{lem_NN20},
together with an elegant application 
of Meyer's inequality to establish 
the bound \eqref{bdd_NN20} for
  $\| V_n(t) - V_n(s) \|_{p}$,
 by assuming $\varphi\in L^p(\g)$
 for some $p>2$.
 Note that 
 
 \noi
\[
N_n(t) = \frac{1}{\sqrt{n}} \sum_{k=1}^{\lfloor nt \rfloor} P_\tau \varphi(Z_k)
\quad\text{with $e^{-2\tau} = 1- \nu^2$}, 
\]
where $\rho_Z\in\ell^d(\Z)$ (provided $\rho_G\in\ell^d(\Z)$ for some $d\geq 2$)
and $P_\tau \varphi  \in L^p(\g)$ for $p = 1 + e^{2\tau}>2$
by Lemma \ref{lem_1} and Proposition \ref{prop_1}.
As a result, we have the tightness for $\{N_n\}_{n\geq 1}$ by 
invoking the result of Nourdin and Nualart
with $d\geq 2$.
In the following, we will present a simple proof 
{\it without} using Meyer's inequality.

\begin{proposition} \label{prop_tight_N}
Suppose $\rho_G\in\ell^d(\Z)$ for some $d\geq 2$ 
and $\nu^2 \in ( 0, 1]$.
Then, 
 $\{N_n\}_{n\ge1}$ is
tight in 
$\mathbf{D}([0,1])$
and
\[
N_n\LRA \mathfrak{b} W
\quad\text{in  $\mathbf{D}([0,1])$,} 
\]

\noi
where 
$
\mathfrak{b}^2
=
\sum_{q=d}^{\infty}q!c_q^2 
\sum_{k\in\Z}  \big[(1- \nu^2) \rho_Z(k) \big]^q \in [0,\infty).$

\end{proposition}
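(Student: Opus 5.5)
We prove tightness and finite-dimensional convergence separately.

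\emph{Reduction.} By Lemma \ref{lem_1}-(iii), $N_n(t)=n^{-1/2}\sum_{k=1}^{\lfloor nt\rfloor}\psi(Z_k)$ with $\psi:=P_\tau\varphi$, where $e^{-\tau}=\sqrt{1-\nu^2}$. If $\nu^2=1$ then $\psi\equiv \E[\varphi(G_0)]=0$ and there is nothing to prove, so we assume $\nu^2\in(0,1)$. Mehler's formula gives $\psi=\sum_{q\ge d}c_q e^{-q\tau}H_q=\sum_{q\ge d}c_q(1-\nu^2)^{q/2}H_q$. By Proposition \ref{prop_1}, $\psi\in L^{p}(\g)$ with $p=1+e^{2\tau}>2$. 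By Lemma \ref{lem_1}-(ii), $\rho_Z=\rho_Y/(1-\nu^2)\in\ell^d(\Z)$.

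\emph{Finite-dimensional convergence.} Apply Theorem \ref{thm_BM} to $\psi$ and the stationary Gaussian sequence $\{Z_k\}$. Note that $\psi$ has Hermite rank $d$, since $c_d(1-\nu^2)^{d/2}\neq0$. The limiting variance is $\sum_{q\ge d}q!c_q^2(1-\nu^2)^q\sum_k\rho_Z(k)^q=\mathfrak b^2$, as claimed.

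\emph{Tightness.} We verify \eqref{bdd_NN20} for $Y_n=N_n$ directly, without Meyer's inequality, using a factorization of the semigroup. Split $\tau=\tau_1+\tau_2$ with $\tau_1,\tau_2>0$ and write $\psi=P_{\tau_1}(P_{\tau_2}\varphi)$. For $0\le s\le t\le 1$, set
\[
S:=\sum_{k=\lfloor ns\rfloor+1}^{\lfloor nt\rfloor}\psi(Z_k)=P_{\tau_1}F,
\qquad
F:=\sum_{k=\lfloor ns\rfloor+1}^{\lfloor nt\rfloor}P_{\tau_2}\varphi(Z_k).
\]
Here $P_{\tau_1}$ is the Ornstein--Uhlenbeck semigroup on the Gaussian space generated by $\{Z_k\}$, and it acts coordinatewise on each term. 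Proposition \ref{prop_1} then gives $\|S\|_{p_1}\le\|F\|_2$ with $p_1=1+e^{2\tau_1}>2$.

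It remains to bound $\|F\|_2^2$ by $C(\lfloor nt\rfloor-\lfloor ns\rfloor)$. Write $m:=\lfloor nt\rfloor-\lfloor ns\rfloor$ and $\tilde c_q:=c_qe^{-q\tau_2}$. Orthogonality of chaoses gives
\[
\|F\|_2^2=\sum_{q\ge d}q!\,\tilde c_q^2\sum_{i,j}\rho_Z(i-j)^q\le m\sum_{q\ge d}q!\,c_q^2\sum_{k\in\Z}|\rho_Z(k)|^q\le m\,\|\varphi\|_{L^2(\g)}^2\,\|\rho_Z\|_{\ell^d}^d,
\]
using $|\rho_Z|\le1$. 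This yields \eqref{bdd_NN20} with $p=p_1$, and Lemma \ref{lem_NN20} gives tightness. Combined with the finite-dimensional convergence, this proves $N_n\LRA\mathfrak b W$.

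\emph{Main obstacle.} The delicate point is ensuring that the semigroup of the $Z$-Gaussian space applied to a sum of functions of $Z_k$ agrees with the coordinatewise action $P_{\tau_1}\big(P_{\tau_2}\varphi(Z_k)\big)=P_\tau\varphi(Z_k)$. This holds by Mehler's formula \eqref{Ptau} applied in the isonormal framework generated by $\{Z_k\}$, together with the semigroup property. Because the $L^2$ bound is applied to $F$ rather than to $\varphi$ itself, no moment assumption on $\varphi$ beyond $L^2(\g)$ is needed.
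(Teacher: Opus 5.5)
Your argument is correct and is essentially the paper's proof. Finite-dimensional convergence comes from Theorem~\ref{thm_BM} applied to $P_\tau\varphi$ and $\{Z_k\}$, and tightness comes from hypercontractivity (Proposition~\ref{prop_1}), the chaos-orthogonality $L^2$ bound of order $(\lfloor nt\rfloor-\lfloor ns\rfloor)/n$, and Lemma~\ref{lem_NN20}. Your splitting $\tau=\tau_1+\tau_2$ is harmless but unnecessary: the paper takes $\tau_2=0$, applying $P_\tau$ directly to $n^{-1/2}\sum_{\lfloor ns\rfloor<k\le\lfloor nt\rfloor}\varphi(Z_k)$, which gives the exponent $p=1+e^{2\tau}$.
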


\begin{proof}

If $\nu^2=1$, then  $\DL_k=0$ and thus the result is trivial.
Throughout this proof, we  assume 
$0<\nu^2<1$.

By Lemma \ref{lem_1},
 $\{Z_k\}_{k\in\Z}$ is a centered stationary Gaussian sequence with covariance
$\rho_Z\in\ell^d(\Z).$ Then f.d.d. convergence of 
$N_n \Rightarrow \mathfrak{b}W$ follows from Theorem \ref{thm_BM}. 
It remains to prove the tightness of $\{N_n\}_{n\geq 1}$.
For $t > s$, we first write 

\noi
\begin{align} \label{mark1}
N_n(t) - N_n(s) = \frac{1}{\sqrt{n}} \sum_{\lfloor ns \rfloor < k \leq \lfloor nt \rfloor  } P_\tau \varphi(Z_k) 
 = P_\tau\bigg(  \frac{1}{\sqrt{n}} \sum_{\lfloor ns \rfloor < k \leq \lfloor nt \rfloor  }   \varphi(Z_k)  \bigg)
\end{align}
and then we apply  hypercontractivity (Proposition  \ref{prop_1})
to obtain, with $p =1 +e^{2\tau} > 2$, 

\noi
\begin{align*}
\big\| N_n(t) - N_n(s) \big\|_{p} 
\leq   \bigg\|  \frac{1}{\sqrt{n}} \sum_{\lfloor ns \rfloor < k \leq \lfloor nt \rfloor  }   
         \varphi(Z_k)  \bigg\|_{2}.
\end{align*}
Without loss of generality, 
we can assume $\lfloor ns\rfloor < \lfloor nt\rfloor $ 
and write with $I_{s,t} = (\lfloor ns \rfloor , \lfloor nt \rfloor    ]$,
\begin{align}
& \quad \bigg\|  \frac{1}{\sqrt{n}} \sum_{\lfloor ns \rfloor < k \leq \lfloor nt \rfloor  }   \varphi(Z_k)
              \bigg\|_{2}^2
= \frac{1}{n} \sum_{k, j\in I_{s,t}} \E\big[ \varphi(Z_k) \varphi(Z_j) \big] 
\notag\\
& =\frac{\lfloor nt\rfloor - \lfloor ns\rfloor }{n}  \sum_{q\geq d} c_q^2 q!  
 \bigg( \frac{1}{\lfloor nt\rfloor - \lfloor ns\rfloor} \sum_{k, j\in I_{s,t}}  
 \rho_Z(k-j)^q \bigg) 
 \notag\\
 &\leq \frac{\lfloor nt\rfloor - \lfloor ns\rfloor }{n}  \sum_{q\geq d} c_q^2 q!  
 \sum_{|m|\leq \lfloor nt\rfloor - \lfloor ns\rfloor}  
 \frac{\lfloor nt\rfloor - \lfloor ns\rfloor - |m|}{\lfloor nt\rfloor - \lfloor ns\rfloor} |\rho^q_Z(m)| 
 \label{mark2}  \\
 &\leq  \frac{\lfloor nt\rfloor - \lfloor ns\rfloor }{n}  \sum_{q\geq d} c_q^2 q!  
 \| \rho_Z\|^q_{\ell^q(\Z)} \leq \frac{\lfloor nt\rfloor - \lfloor ns\rfloor }{n}  \| \varphi\|^2_{L^2(\g)} 
 \| \rho_Z\|^d_{\ell^d(\Z)}.
 \notag
\end{align}

\noi
This concludes the proof of tightness of $\{ N_n\}_{n\geq 1}$
and  $N_n \LRA \mathfrak{b}W$ in $\mathbf{D}([0,1])$.
\qedhere
 \end{proof}

So far, we have proved $C$-tightness of $\{M_n\}$ and $\{N_n\}$
when $\rho_G\in\ell^d(\Z)$ for $d\geq2$ and $\nu^2>0$.
Then, we can deduce from Lemma \ref{tech_MN}
that $\{V_n\}_{n\geq 1}$ is $C$-tight in this case.  
 The finite-dimensional convergence of $V_n$ is exactly Theorem~\ref{thm_BM}; 
hence tightness yields the desired convergence in $\mathbf D([0,1])$.

It remains to deal with the case $d=1$. 
If $\varphi(x) = c_1x$, the result follows easily from the 
standard Gaussian computations with $\rho_G\in\ell^1(\Z)$
and Lemma \ref{lem_NN20}.
Otherwise, we can write 
$\varphi(x) = c_1x + \widehat{\varphi}(x)$ with $\widehat\varphi$ having 
Hermite rank $\widehat d \geq 2$.
In this way, we write 
\[
V_n(t) = \frac{1}{\sqrt{n}} \sum_{k=1}^{\lfloor nt \rfloor} c_1 G_k + \widehat V_n(t),
\]
where $\widehat V_n(t)  =  \frac{1}{\sqrt{n}} \sum_{k=1}^{\lfloor nt \rfloor} \widehat\varphi(G_k)$.
By the previous paragraph, we have the $C$-tightness of $\{\widehat V_n\}_{n\geq 1}$ and 
the Gaussian process component.
Thus, $\{V_n\}_{n\geq 1}$ is $C$-tight by Lemma \ref{tech_MN}.
Combining tightness with Theorem~\ref{thm_BM} proves the 
desired functional convergence in  $\mathbf{D}([0,1])$.

Hence, the proof of Theorem \ref{thm_BMD1} is completed. 
\hfill $\square$

\subsection{Proof of Theorem \ref{thm_BMD2} (deterministic case)} \label{SEC_32}

Let us first prove part (i) of Theorem \ref{thm_BMD2}. 
Recall that
for an integer $q\geq 2$
and
 $r\in\{0,\ldots,q-1\}$, we define the decimated Gaussian sequence
\begin{equation}\label{def_GR}
\text{$G_j^{(r)}:=G_{qj+r},$
 $j\in\Z$}.
\end{equation}
Its covariance is
\begin{equation}\label{def_CQ}
\mathcal{C}_q(h)
:=
\E[G_0^{(r)}G_h^{(r)}]
=
\rho_G(qh),
\end{equation}
which does not depend on $r$ due to stationarity of $\mathbf{G}$.

\begin{proof}[Proof of Theorem \ref{thm_BMD2}-\rm(i)]

As noted above,  the law of the decimated sequence
$\{G_{qj+r}:j\in\Z\}$ does not depend on $r$.  
Thus,  \eqref{DEC} implies that
each residue-class decimation is non-deterministic.
That is, 
one can find some integer $q\geq 2$ such that
 for each $r\in\{0, ..., q-1\}$,
the decimated sequence
$
\{G^{(r)}_j, j\in\Z\}
$
is non-deterministic with   covariance  
$
\mathcal{C}_q\in\ell^d(\Z)
$.
Define 

\noi
\begin{align*}  
U_N^{(r)}(t)
:=
\frac1{\sqrt N}
\sum_{j=1}^{\lfloor Nt\rfloor}
\varphi(G_{qj+r}),
\quad  t\in[0,1].
\end{align*}

\noi
Then, by Theorem \ref{thm_BMD1},
$\big\{ U^{(r)}_N \big\}_{N\geq 1}$
is  $C$-tight in 
$\mathbf{D}([0,1])$.

For $r\in\{ 0,\ldots,q-1\}$, define
$
V_{n,r}(t)
:=
\frac1{\sqrt n}
\sum_{k=1}^{\lfloor nt\rfloor}  
\varphi(G_k) \ind_{\{ k =  r  \, ({\rm mod}\, q)\}}.
$
Then,

\noi
\begin{align}\label{decompV}
V_n(t)=\sum_{r=0}^{q-1}V_{n,r}(t).
\end{align}

\noi
For each fixed $r$, $V_{n,r}$ differs, up to at most two boundary terms 
 divided by $\sqrt n$, from a deterministic time change and scaling
of \(U_N^{(r)}\), with 
$N/n\to 1/q$. 
More precisely, we define 
\[
m_{n,r}:=
\#\{1\le k\le n:  k =  r  \, ({\rm mod}\, q)\}
\quad
{\rm and}
\quad
\theta_{n,r}(t)
:=
\frac{ m_{\lfloor nt\rfloor, r} }{m_{n,r}}.
\]
In the rest of the proof, we assume $n\geq q$
so that  $m_{n,r} > 0$ and then  the function $\theta_{n,r}$ is well defined. 
From Lemma \ref{lem_3a}, we have $\frac{m_{n,r}}{n} \to \frac1q$ as $n\to+\infty$,
and
$\theta_{n,r}: [0,1] \to [0,1]$ is nondecreasing 
with $\theta_{n,r}(0)=0$, $\theta_{n,r}(1)=1$,
and
$\sup\{ | \theta_{n, r}(t)  - t  |  : t\in[0,1] \}
\xrightarrow{n\to+\infty} 0.
$
Then, it follows from Lemma \ref{lem_3b} that 
$\big\{ U_{m_{n, r}}^{(r)}(\theta_{n, r}(t)): t\in[0,1] \big\}_{n\geq q}$
is  $C$-tight. 
Next, we write 
\[
V_{n,r}(t)
=
\sqrt{\frac{m_{n,r}}{n}}\,
U_{m_{n,r}}^{(r)}(\theta_{n,r}(t))
+
R_{n,r}(t),
\]
where $R_{n,r}$ is the boundary correction term given by 
\[
R_{n, r}(t) = \frac{1}{\sqrt{n}}\big[ \varphi(G_r) - \varphi(  G_{r + q m_{\lfloor nt \rfloor, r} } ) \big]
\ind_{\{ 1\leq r \leq q-1\}}.
\]
From $qm_{n,r}/n\to 1$ and \eqref{lim_max}, 
we can easily deduce 
\begin{align}\label{L2_R}
\lim_{n\to+\infty}\E\bigg[ \sup_{0\le t\le1}|R_{n,r}(t)|^2 \bigg]
= 0.
\end{align}
Then,  it follows   from \eqref{L2_R}
that $\{R_{n,r}\}_{n\geq q}$ is $C$-tight. In fact, the tightness criterion \eqref{cond_tight}
can be applied here with the easy fact $w'_x(\dl) \leq 2 \|x\|_{\rm \sup}$.

\medskip

Hence, $\{V_{n,r}\}_{n\geq q}$ is $C$-tight for each $r$,
and so is $\{V_n\}_{n\geq q}$ in view of \eqref{decompV}.  
Hence, the proof of Theorem \ref{thm_BMD2}-(i) is completed
by combining the f.d.d. convergence in Theorem \ref{thm_BM} 
and the obtained tightness.
\qedhere

\end{proof}

Next, we prove part (ii) of Theorem \ref{thm_BMD2},
which essentially asserts that a deterministic
stationary Gaussian sequence with absolutely summable covariance 
becomes non-deterministic after sufficiently large decimation.

\bigskip
\noi 
{\it Proof of Theorem \ref{thm_BMD2}-\rm(ii)}.
By assumption, the $\fq$-decimated covariance $\mathcal{C}_{\fq}(h)=\rho_G(\fq h)$ belongs to $\ell^1(\Z)$.  Thus, for each residue class modulo $\fq$, the corresponding decimated Gaussian sequence has an absolutely summable covariance.  It is enough to prove Theorem \ref{thm_BMD2}-(ii)
 in the special case $\rho_G\in\ell^1(\Z)$: if an integer $q_0$ works for every residue class of any stationary Gaussian sequence with absolutely summable covariance, then applying that result to each of the $\fq$ residue classes gives a decimation of the original sequence with step $q=\fq q_0$.  This $q$ is divisible by $\fq$ and works for all residue classes modulo $q$.

{\bf Throughout this proof, we will assume}
 $\rho_G\in\ell^1(\Z)$.
 Then, 
the spectral density
\begin{equation}\label{eq_m}
\fm(\lambda)
:=
\sum_{k\in\Z}\rho_G(k)e^{-ik\lambda}
\end{equation}
is {\it continuous} and nonnegative on
the unit circle  $\T=[-\pi,\pi)$
such that 
$
\frac1{2\pi}\int_{-\pi}^{\pi}\fm(\lambda)\,d\lambda
=
\rho_G(0)=1.
$
Therefore, there exist a nonempty  open arc
$I\subset\T$ and a constant $a>0$ such that
\begin{equation}\label{low_ma}
\text{$\fm(\lambda)\geq a$ 
for  any $\lambda\in I$}.
\end{equation}

Choose an integer $q\geq 2$ such that
\begin{equation}\label{q_chosen}
\frac{2\pi}{q}< \vert I \vert,
\end{equation}
where \(|I|\) denotes the length of the arc \(I\).
Recall
$
\mathcal{C}_q\in\ell^1(\Z)
$
and the associated spectral density $\fm_q$ is given by 
\begin{align}\label{mq1}
\fm_q(\lambda) 
= \sum_{k\in\Z}\mathcal{C}_q(k)e^{-ik\lambda}
=\sum_{k\in\Z}\rho_G(qk)e^{-ik\lambda}
\end{align}
for $\lambda\in\T$. 
Next, we present an equivalent expression for $\fm_q$
in terms of $\fm$,
and we write 
$[x]_{\T}$ for the unique representative of 
$x$ modulo $2\pi$ that
belongs to \(\T=[-\pi,\pi)\).

\begin{lemma}   \label{lem_2a}
Let $\mathbf G$ have an absolutely continuous spectral measure with density
$\mathfrak m\in L^1(\mathbb T)$. For $q\ge2$, the $q$-decimated sequence
has spectral density
\begin{align}\label{eq_mq}
\mathfrak m_q(\theta)
=
\frac1q\sum_{\ell=0}^{q-1}
\mathfrak m\left(\left[\frac{\theta+2\pi\ell}{q}\right]_{\mathbb T}\right)
\end{align}
for a.e. $\theta\in\mathbb T$.
If, in addition, $\rho_G\in\ell^1(\mathbb Z)$, then the equality holds
pointwise for the continuous versions.
\end{lemma}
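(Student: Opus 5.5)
The plan is to identify $\mathfrak m_q$ through its Fourier coefficients. By uniqueness of the spectral measure on $\T$ (Herglotz), it suffices to show that the right-hand side of \eqref{eq_mq}, call it $\widetilde{\mathfrak m}_q$, is a nonnegative $L^1(\T)$ function satisfying
\[
\frac1{2\pi}\int_{-\pi}^{\pi}e^{-ih\theta}\,\widetilde{\mathfrak m}_q(\theta)\,d\theta=\rho_G(qh)=\mathcal C_q(h)
\]
for every $h\in\Z$. Nonnegativity is immediate. Integrability, with $\|\widetilde{\mathfrak m}_q\|_{L^1}=\|\mathfrak m\|_{L^1}$, will drop out of the same change of variables used below.

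For the main computation, fix $h$ and write the integral as a sum over $\ell$. In the $\ell$-th term, substitute $\lambda=(\theta+2\pi\ell)/q$. As $\theta$ runs over $[-\pi,\pi)$, $\lambda$ runs over the interval $J_\ell=[(-\pi+2\pi\ell)/q,(\pi+2\pi\ell)/q)$ of length $2\pi/q$, and $d\theta=q\,d\lambda$, which cancels the prefactor $1/q$. The key observation is that
\[
e^{-ih\theta}=e^{-ih(q\lambda-2\pi\ell)}=e^{-iqh\lambda},
\]
and that $e^{-iqh\lambda}$ is $2\pi$-periodic in $\lambda$. Hence replacing $\lambda$ by its representative $[\lambda]_{\T}$ changes neither the exponential nor $\mathfrak m$. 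The intervals $J_0,\dots,J_{q-1}$ tile an interval of length $2\pi$, namely $[-\pi/q,\,2\pi-\pi/q)$, so their images modulo $2\pi$ partition $\T$ up to endpoints. Summing over $\ell$ gives
\[
\frac1{2\pi}\int_{\T}e^{-iqh\lambda}\,\mathfrak m(\lambda)\,d\lambda=\rho_G(qh),
\]
using the sign convention of the Herglotz representation in the paper. Taking $h=0$ with $|\cdot|$ gives the $L^1$ claim. Fourier uniqueness then yields the a.e. equality.

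For the continuous statement under $\rho_G\in\ell^1(\Z)$, note that $\mathfrak m$ given by \eqref{eq_m} is continuous and $2\pi$-periodic. The map $\theta\mapsto[(\theta+2\pi\ell)/q]_{\T}$ is continuous as a map into the circle, and the sum over $\ell$ is invariant under $\theta\mapsto\theta+2\pi$, since this shifts $\ell$ to $\ell+1$ modulo $q$. So $\widetilde{\mathfrak m}_q$ is a continuous function on the circle. On the other side, $\mathfrak m_q$ in \eqref{mq1} is continuous because $\mathcal C_q\in\ell^1(\Z)$. Two continuous functions that agree a.e. agree everywhere.

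The only delicate step is the bookkeeping in the change of variables: checking that the shifted intervals $J_\ell$ cover $\T$ exactly once modulo $2\pi$, and matching the sign convention $e^{-ik\xi}$ versus $e^{ik\xi}$ against the definition of $\fm$ in \eqref{eq_m}. Everything else is routine.
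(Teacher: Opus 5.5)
Your proof is correct and follows essentially the same route as the paper: you compute the Fourier coefficients of the aliased sum via the substitution $\lambda=(\theta+2\pi\ell)/q$, use that the $q$ arcs of length $2\pi/q$ tile $\T$ together with $e^{\mp ih\theta}=e^{\mp iqh\lambda}$ to obtain $\rho_G(qh)$, conclude by uniqueness of the spectral measure, and upgrade to pointwise equality by continuity when $\rho_G\in\ell^1(\Z)$. Your explicit check that the sum is continuous across $\theta=\pm\pi$ (the index shift $\ell\mapsto\ell+1$) supplies a detail the paper leaves implicit, and the sign convention you flag is in fact immaterial because $\rho_G$ is even.
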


\begin{proof}
The formula \eqref{eq_mq} clearly defines a function $\fm_q\in L^1(\T)$.
For $h\in\Z$,

\noi
\begin{align}    \label{AL1}
 \frac{1}{2\pi} \int_{\T}e^{ih\theta}  \fm_q(\theta)\,d\theta
&= \frac{1}{2\pi q}  \sum_{\ell=0}^{q-1}   \int_{\T} e^{ih\theta}
         \fm  \big(   \big[\tfrac{\theta+2\pi\ell}{q}\big]_{\T}  \big) d\theta .
\end{align}
For the $\ell$-th integral, set
$   \lambda  =   \big[\tfrac{\theta+2\pi\ell}{q}\big]_{\T}$.
As $\theta$ runs over $\T$, the variable $\lambda$ runs over an arc
$I_\ell$ of length $2\pi/q$, and the arcs
$I_0,\ldots,I_{q-1}$ form a partition of $\T$ modulo endpoints.
Moreover,
\[
        \theta=q\lambda-2\pi\ell
        \quad \mathrm{mod}\, 2\pi,
        \qquad d\theta=q\,d\lambda .
\]
Since $h\in\Z$, we have 
$ 
e^{ih\theta}  =    e^{ih(q\lambda-2\pi\ell)}   =   e^{iqh\lambda}.
$
Therefore, from \eqref{AL1},
\begin{align}
        \frac{1}{2\pi}
        \int_{\mathbb T}e^{ih\theta}\mathfrak m_q(\theta)\,d\theta
        =
        \frac{1}{2\pi}
        \sum_{\ell=0}^{q-1}
        \int_{I_\ell}
        e^{iqh\lambda}\mathfrak m(\lambda)\,d\lambda
        =
        \frac{1}{2\pi}
        \int_{\mathbb T}
        e^{iqh\lambda}\mathfrak m(\lambda)\,d\lambda
        =
        \rho_G(qh).
        \label{AL2}
\end{align}

\noi
Hence, $\fm_q$ is a spectral density of the $q$-decimated sequence. 
Finally, if $\rho_G\in\ell^1(\Z)$, then
$\fm$
is continuous, and $\fm_q$ in  \eqref{eq_mq}  is also
continuous. Since continuous functions on $\T$ are determined by
their Fourier coefficients, the identity   holds   for every
$\theta\in\T$.
\qedhere
\end{proof}

\begin{remark}[\textsf{Spectral aliasing}]
\label{rem_ali}
\rm
The identity  \eqref{eq_mq}  in Lemma \ref{lem_2a} is the standard spectral aliasing formula
for downsampling. It is familiar in the spectral analysis of wavelet and
filter-bank transforms: after a filtering step, keeping one point out of every
$q$ folds the spectrum by averaging $q$ shifted and rescaled copies of the
original density. The dyadic case $q=2$ appears explicitly in the recursive
spectral-density relations for discrete wavelet coefficients in
Mielniczuk and Wojdyllo \cite{MW05}. Related central limit theorems
for arrays of decimated linear processes, motivated in part by spectral-density
and wavelet long-memory estimation, were studied by Roueff and Taqqu
\cite{RT09a,RT09b}.
%
The role of aliasing here is different from its role in wavelet estimation.
We do not use it to analyze a scalogram or a spectral estimator. Instead, we
use aliasing to force positivity of the spectral density of a decimated Gaussian
sequence. Indeed, once a positive arc of the original spectral density is hit
by every $q$-point preimage grid, the aliased density has a positive lower
bound. This is precisely the step leading to \eqref{low_mqa} below, 
and hence to non-determinism through Kolmogorov's prediction formula \eqref{def_nu2}.

\end{remark}

\begin{lemma}\label{lem_mqb}
Assume $\rho_G\in\ell^1(\Z)$ and 
let  $q\geq 2$ be   as in \eqref{q_chosen}.
For any  $r\in\{ 0,\ldots,q-1\}$,
the decimated
sequence $\{G_j^{(r)}\}_{j\in\Z}$ 
is
non-deterministic in the one-step prediction sense.

\end{lemma}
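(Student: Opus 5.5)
The plan is to show that the aliased spectral density $\fm_q$ from Lemma \ref{lem_2a} is bounded below by a positive constant on all of $\T$. Kolmogorov's prediction formula \eqref{def_nu2} then gives a strictly positive one-step prediction error for the decimated sequence. Since the law of $\{G^{(r)}_j\}_{j\in\Z}$ does not depend on $r$ (its covariance is $\mathcal{C}_q(h)=\rho_G(qh)$ by \eqref{def_CQ}), it suffices to treat $r=0$.

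The first step is to note that $\mathcal{C}_q\in\ell^1(\Z)$, because it is a subsequence of $\rho_G\in\ell^1(\Z)$. Hence $\fm_q$ in \eqref{mq1} is continuous and nonnegative. By Lemma \ref{lem_2a} it satisfies the aliasing identity \eqref{eq_mq} pointwise.

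The key geometric step is this. Fix $\theta\in\T$. The $q$ points $\lambda_\ell=[(\theta+2\pi\ell)/q]_{\T}$, $\ell=0,\dots,q-1$, form a grid on the circle with consecutive spacing exactly $2\pi/q$. Indeed, they are the $q$ preimages of $\theta$ under $\lambda\mapsto q\lambda$ mod $2\pi$, i.e.\ a rotated copy of the $q$-th roots of unity. The open arc $I$ from \eqref{low_ma} has length $|I|>2\pi/q$ by \eqref{q_chosen}, so it must contain at least one grid point $\lambda_{\ell_0}$. Otherwise $I$ would lie inside a gap between consecutive grid points, which is an open arc of length $2\pi/q$. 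Since every summand in \eqref{eq_mq} is nonnegative, we get
\begin{align}\label{low_mqa}
\fm_q(\theta)\geq \frac1q\,\fm(\lambda_{\ell_0})\geq \frac aq>0\quad\text{for every }\theta\in\T.
\end{align}
The only point requiring care is the bookkeeping of the representative $[\cdot]_{\T}$ and of the endpoints of arcs. Working on the circle $\R/2\pi\Z$ handles this, and strictness of \eqref{q_chosen} absorbs the endpoint issues.

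Finally, \eqref{low_mqa} implies $\log\fm_q\geq \log(a/q)$. Also $\log\fm_q\leq \fm_q$, and $\fm_q$ is integrable with $\frac1{2\pi}\int_\T\fm_q=\mathcal C_q(0)=1$. So $\log\fm_q$ is integrable, and Kolmogorov's formula \eqref{def_nu2} applied to the decimated sequence gives
\[
\nu_q^2=\exp\Big(\frac1{2\pi}\int_\T\log\fm_q(\lambda)\,d\lambda\Big)\geq \frac aq>0 .
\]
This is exactly non-determinism in the sense of \eqref{def_nu1} for $\{G^{(r)}_j\}_{j\in\Z}$. The applicability of \eqref{def_nu2} is justified because the decimated sequence has an absolutely continuous spectral measure with density $\fm_q$, as established in Lemma \ref{lem_2a}. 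I expect no real obstacle beyond the covering argument in the previous paragraph.
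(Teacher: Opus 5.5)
Your proposal is correct and follows the paper's proof essentially step for step. As in the paper, the $q$ preimages of $\theta$ form a grid of mesh $2\pi/q$ that must meet the arc $I$, which via the aliasing formula of Lemma~\ref{lem_2a} gives $\inf_{\theta}\fm_q(\theta)\ge a/q$, and Kolmogorov's prediction formula then yields $\nu_q^2>0$ (your explicit integrability check for $\log\fm_q$ and the bound $\nu_q^2\ge a/q$ are small additions the paper leaves implicit).
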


\begin{proof}
Fix $\theta\in\T$. The $q$ points
$
\big[
\tfrac{\theta+2\pi\ell}{q}
\big]_{\T}$,  $\ell=0,\ldots,q-1$,
are exactly the \(q\) preimages of \(\theta\) under the map
$
\lambda\in\T\mapsto q\lambda\pmod{2\pi}.
$
They form a grid on the circle with mesh \(2\pi/q\). 
By  \eqref{q_chosen},
this grid must intersect the arc \(I\). Hence, for every \(\theta\in\T\),
there exists \(\ell=\ell(\theta)\) such that
$
\big[
\tfrac{\theta+2\pi\ell}{q}
\big]_{\T}
\in I$
so that 
$
\fm\big(
\big[
\tfrac{\theta+2\pi\ell}{q}
\big]_{\T}
\big)
\geq a
$
in view of   \eqref{low_ma}.
Therefore, we deduce from Lemma \ref{lem_2a}
that 
\begin{equation}\label{low_mqa}
\inf_{\theta\in\T}\fm_q(\theta) \geq \frac{a}{q} > 0,
\end{equation}

\noi
and thus
\begin{align}\label{def_nuq}
\nu_q^2
:=
\exp\left\{
\frac1{2\pi}
\int_{-\pi}^{\pi}\log\fm_q(\theta)\,d\theta
\right\}
>0.
\end{align}
Hence, we can conclude the proof by applying
Kolmogorov's prediction formula \eqref{def_nu2}.
\qedhere

\end{proof}

Now we have fully proved Theorem \ref{thm_BMD2}.

\begin{remark}[\textsf{On the decimation technique}]
\label{rem_DEC}
\rm
Decimation, or decomposition into arithmetic subsequences, has appeared
in several Gaussian limit-theorem arguments. For instance, this technique
is used in Bardet and Surgailis \cite{BS13b} 
(the paper referred to as Bardet-Surgailis (2013b) in \cite{LPV23})
and later in Li-Pakkanen-Veraart \cite{LPV23}. 
In these works, the basic role of decimation is to split a partial sum into
finitely many residue classes so that, inside each class, indices are separated
by a prescribed distance. This separation makes the relevant Gaussian
correlations small enough for the available moment bounds to apply.
%
Our use of decimation is different. Our goal is not merely to separate
highly correlated summands. In the deterministic case, the
martingale-predictable decomposition is unavailable along the full sequence,
because the one-step innovation variance vanishes. We instead choose a
decimation for which the aliased spectral density becomes bounded away from
zero, and then Kolmogorov's prediction formula yields non-determinism for
each residue class. Thus, in the present paper, decimation is used as a
prediction-theoretic device: it recovers innovations on arithmetic subsequences,
after which the non-deterministic BMD principle can be applied and the
finitely many residue classes recombined.

\end{remark}

In the following, we present a few examples on decimations.

\begin{example}[\textsf{$\rho_G\in \ell^1(\Z)$ is not necessary 
for non-determinism via decimation}]
\rm
\label{exam1}
Consider the spectral density 
\[
\fm(\lambda)
=
2\,\ind_{[-\pi/2,\pi/2) }(\lambda),
\quad
\lambda\in[-\pi,\pi).
\]
Then,  the associated  covariance function is
\[
\rho_G(k)
=
\frac1{2\pi}
\int_{-\pi}^{\pi}
e^{ik\lambda}\fm(\lambda)\,d\lambda
=
\frac{2}{\pi}\frac{\sin(k\pi/2)}{k}
\]
for $k\neq 0$ and $\rho_G(0)=1$.
Thus, 
$
\rho_G\in\ell^2(\Z) \setminus \ell^1(\Z).
$
Note that the original sequence $\mathbf{G}$ is deterministic, 
because $\fm$ vanishes on a set
of positive Lebesgue measure. 
However, for $q=2$,
\[
\fm_2(\theta)
=
\frac12
\left[
\fm\left(\frac{\theta}{2}\right)
+
\fm\left(
\left[\frac{\theta+2\pi}{2}\right]_{\T}
\right)
\right]
=
1
\]
for   every \(\theta\in[-\pi,\pi)\). 
Therefore, the two decimated
sequences
$
(G_{2j})_{j\in\Z}
$
and
$
(G_{2j+1})_{j\in\Z}
$
are non-deterministic; in fact, they are i.i.d. standard Gaussian
sequences.
\end{example}

\begin{example}[\textsf{$\rho_G\in \ell^2(\Z)$ alone 
does not guarantee 
non-deterministic decimation}]
\label{exam2}
\rm
In the following, 
we construct an absolutely continuous spectral density $\fm$ with
$\rho_G\in\ell^2(\Z)$
such that no decimation becomes non-deterministic.

Let $\T=[-\pi,\pi)$, understood modulo $2\pi$. 
For $q\geq 2$,
choose the open arc
$
B_q
:= ( -2^{-q-1}, 2^{-q-1} )
$
and 
define
$
E_q
:=
\left\{
\lambda\in\T:
q\lambda \,\, ({\rm mod}\, 2\pi)  \in B_q
\right\}.
$
It is not difficult to see that\footnote{The set $E_q$ is the union 
of $q$ equally spaced open arcs,
   each having  length $\frac{1}{q2^q}$,
and every gap between two consecutive arcs has length
strictly smaller than $2\pi/q$. 
Now let $I\subset\T$ be any nonempty open arc. 
Choose $q\geq 2$ so large
that
$
\frac{2\pi}{q}<\vert I \vert.$
Since every connected component of 
$\T\setminus E_q$ has length strictly
smaller than $|I|$,  
the arc $I$ cannot be contained in $\T\setminus E_q$.
Therefore $I\cap U\neq\emptyset$.
Hence $U$ is dense in $\T$, that is, \eqref{claim_U} is proved.}

\noi
\begin{align} \label{claim_U}
U
:=
\bigcup_{q=2}^{\infty}E_q
\text{\quad is dense in $\T$.}
\end{align}

\noi
Moreover, since $E_q$ is the union of $q$ arcs, each of length $\frac{1}{q2^q}$, we have
$
|U|
\leq
\sum_{q=2}^{\infty}|E_q| 
<2\pi.
$
Then, the set 
$
A:=\T\setminus U
$
is closed, has positive Lebesgue measure, and has empty interior.

Define
$
\fm(\lambda)
=
\frac{2\pi}{|A|}\ind_A(\lambda)$
for $\lambda\in\T$.
Then, $\fm$ is well defined to be a spectral density on $\T$.
Due to  $\fm\in L^2(\T)$, Parseval's identity yields
$
\rho_G\in\ell^2(\Z).
$
On the other hand, if $\rho_G$ were in $\ell^1(\Z)$,
the spectral density $\fm$ would admit  a continuous 
version, which is impossible. 
That is, we just proved 
\[
\rho_G\in\ell^2(\Z)\setminus \ell^1(\Z).
\]
 Note that 
the original sequence $\mathbf{G}$ is deterministic 
in the one-step prediction sense, 
since $\fm=0$ on $U$ with $|U | > 0$.

Next, we   show that every decimation remains deterministic. 
Fix any $q\geq 2$. If
$
\theta\in B_q,
$
then 
$[\frac{\theta+2\pi\ell}{q}]_{\T} \notin A$
 for any  $\ell\in\{0,\ldots,q-1\}$,
 because $\lambda\in U$ whenever 
$
q\lambda\equiv\theta\pmod{2\pi}\in B_q.
$
Consequently,
$\fm\big([ \frac{\theta+2\pi\ell}{q}]_{\T} \big)=0$
for every $\ell\in\{0,\ldots,q-1\}$.
By Lemma \ref{lem_2a},
$
\fm_q(\theta)=0$
for any  $\theta\in B_q$.
Since $B_q$ has positive Lebesgue measure,
we have $\nu_q = 0$ with $\nu_q$ defined as in \eqref{def_nuq},
 and thus
the $q$-decimated sequence is deterministic. 
This example shows that the condition
$
\rho_G\in\ell^2(\Z)
$
alone does not imply the spectral decimation condition.
\end{example}

 \begin{example}[\textsf{Non-deterministic decimations without} \eqref{suff_tight}] \label{not18}
 \rm

Define for $\lambda\in\T$,
\[
\fm(\lambda)
=
\frac{\pi}{\alpha}\mathbf 1_{[-\alpha,\alpha]}(\lambda)
\text{\quad with $\alpha=\pi/\sqrt2$}. 
\]
The corresponding covariance function is
\[
\rho_G(k)
=
\frac{\sin(\alpha k)}{\alpha k} \ind_{\{ k\neq 0 \}} + \ind_{\{ k =0  \}}.
\]

\noi
 Since $\fm$ vanishes on a set of positive Lebesgue measure,
the corresponding sequence $\mathbf{G}$ is deterministic in the one-step prediction sense
and
$\rho_G\in\ell^2(\Z)$.

In the following, we will show that the condition  
\eqref{suff_tight} fails. 
 Indeed, for every integer $q \geq 2$,
 $\alpha q/(2\pi)=q/(2\sqrt2)$ is irrational, 
 then  
 {\it Weyl's equidistribution theorem} implies that
$\{\alpha qk\ ({\rm mod}\ 2\pi)\}_{k\ge1}$ is equidistributed on $[0,2\pi)$;
see \cite[Theorem~2.1, p.~107]{SS03}.
Let  $A:=\{\theta\in[0,2\pi):|\sin\theta|\ge 1/2\}.$
Since $|A|=4\pi/3$, equidistribution gives us

\noi
\begin{align}\label{EquiD}
\frac{1}{N} \# (E_q \cap \{1, 2, ..., N\})
\to
2/3 \quad\text{as $N\to+\infty$,}
\end{align}

\noi
where  $E_q := \{k\in\N:\alpha qk\ ({\rm mod}\ 2\pi)\in A\}$.
Thus,  the set $E_q\cap \{1, 2, ..., N\}$ has positive natural density, and hence the
corresponding harmonic sub-series diverges:\footnote{More precisely,
by \eqref{EquiD},
 $\exists N_0\in\N$ such that  $\forall N\ge N_0$,
$
\#(E_q\cap\{1,\ldots,N\})\ge \frac13 N.
$
Write $E_q=\{n_1<n_2<\cdots\}$.  Then, $\exists j_0 \geq 1$ sufficiently large s.t.  $\forall j\geq j_0$,
$
 \frac{1}{n_j} \#(E_q\cap\{1, 2, \ldots,n_j\})
\geq \frac{1}{3}$,
and hence $n_j\le 3j$.  Therefore
$
\sum_{k\ge1}\frac1k\ind_{\{ \al qk\, ({\rm mod}\, 2\pi)\in A\}}
=
\sum_{j\ge1}\frac1{n_j}
\ge
\frac13\sum_{j\ge j_0}\frac1j
= + \infty.$          }

\noi
\begin{align*}
\sum_{k=1}^{\infty}\frac{|\sin(\alpha qk)|}{k}
\geq
\frac{1}{2}
\sum_{k\geq 1}
\frac{1}{k} \ind_{\{\al qk\ ({\rm mod}\ 2\pi)\in A\}}
=
\infty.
\end{align*}
Consequently, we have 
$
\sum_{k=1}^{\infty}  | \rho_G(q k )|
=\frac{1}{q \al} \sum_{k=1}^{\infty}    \frac{|\sin(\alpha q k)|}{k}
=
\infty.
$
That is,  \eqref{suff_tight} fails.
Nevertheless, the $2$-decimated sequences
are non-deterministic. 
Indeed, the spectral density of the
$2$-decimated sequence is
$
\fm_2(\theta)
=
\tfrac{1}{2}
\left[
\fm\left(\tfrac{\theta}{2}\right)
+
\fm\left(\left[\tfrac{\theta+2\pi}{2}\right]_{\T}\right)
\right]
$
for almost every $\theta\in\T$,
in view of Lemma \ref{lem_2a}.
The   points $\frac{\theta}{2}$ and $[\frac{\theta+2\pi}{2}]_{\T}$ are antipodal on the circle $\T$.
Since the arc $[-\alpha,\alpha]$ has length $2\alpha=\sqrt2\,\pi>\pi$, every
antipodal pair intersects this arc.  Hence
$
\fm_2(\theta)\ge \frac{\pi}{2\alpha}>0
$
for almost every 
$\theta\in\T$.  Therefore the even and odd decimated sequences
are non-deterministic
by Kolmogorov's prediction formula \eqref{def_nu2}.

\end{example}

 \subsection{Proof of Theorem \ref{thm_FGN}} \label{SEC_34}

Let
\[
\wt Y_n(t):=\frac{Y_n(t)}{\sqrt{\log n}}
=\frac1{\sqrt{n\log n}}\sum_{k=1}^{\lfloor nt\rfloor}\varphi(X_k),
\quad t\in[0,1].
\]
The convergence of the finite-dimensional distributions is 
a particular case of Theorem 1' in the original paper \cite{BM83}.
See also the first assertion of \cite[Theorem~5.1]{NN20}, 
whose proof only uses the assumption $\varphi\in L^2(\g)$.  
When $d=1$, we have $H=1/2$, so $\{X_k\}$ is an i.i.d. standard Gaussian sequence.  
The usual Donsker theorem gives tightness of $\{Y_n\}_{n\geq 2}$,
and therefore $\wt Y_n=Y_n/\sqrt{\log n}$ converges to the zero process.  
We henceforth assume $d\ge2$.

With the Hermite expansion \eqref{her_varphi}, the limiting variance of $\wt Y_n(1)$ is given 
by \eqref{sstar},
because for $d\geq 2$,
\begin{center}
$\rho_H(k)\sim \frac{(2d-1)(d-1)}{2d^2}\, |k|^{-1/d}$
as  $|k|\to\infty$
\end{center}
and only the $d$-th chaos contributes to the logarithmic divergence:
\begin{align*}
 \sum_{|k|\leq n} \rho_H(k)^d \sim 2 \big(\tfrac{(2d-1)(d-1)}{2d^2} \big)^d\log n 
\qquad \text{while for $q > d$,}
\qquad \sum_{k\in\Z} |\rho_H(k)|^q  < +\infty.
\end{align*}
It remains to prove tightness.

Let us first mention that
\begin{align}\label{anyH}
\text{the fractional Gaussian noise is non-deterministic for any $H\in(0,1)$.}
\end{align}
Indeed,   its spectral measure  is absolutely continuous 
with spectral density 
\begin{equation}\label{fmH}
\fm_H(\lambda)
=C_H(1-\cos\lambda)\sum_{\ell\in\Z}|\lambda+2\pi\ell|^{-1-2H},
\quad \lambda\in\T
\end{equation}
for a positive constant $C_H$;
see, e.g.,  \cite[(2.17)]{Ber94}.
This density is positive almost everywhere and satisfies the 
following conditions:
\begin{itemize}
\item by \eqref{fmH}, we have $\fm_H(\lambda) \geq C_H (1 - \cos\theta) \pi^{-1-2H}$
for any $\lambda\in\T$ with $|\lambda| \geq \theta > 0$;
\item for some positive constant $C'_H$, we have $\fm_H(\lambda) \sim C'_H  |\lambda|^{1-2H}$ as $\lambda\to 0$;
see \cite[(2.18)]{Ber94}.
\end{itemize}
Then, it is easy to verify that
 $\log \fm_H\in L^1(\T)$,
 and  
 Kolmogorov's prediction formula \eqref{def_nu2} therefore 
 gives a strictly positive 
one-step innovation variance $\nu^2 > 0$.

Then, the rest of the proof is a simple modification of the proof of Theorem \ref{thm_BMD1}.
Let
\begin{center}
$\F_k:=\sigma\{X_j:j\le k\}$, 
$\Delta_k:=\E[\varphi(X_k)\mid \F_{k-1}]$,
and
$
\xi_k:=\varphi(X_k)-\Delta_k.
$
\end{center}
Then, we have the decomposition
$
\wt Y_n(t)=\wt M_n(t) + \wt N_n(t),
 $
where
\[
\wt M_n(t)=\frac1{\sqrt{n\log n}}\sum_{k=1}^{\lfloor nt\rfloor}\xi_k,
\qquad
\wt N_n(t)=\frac1{\sqrt{n\log n}}\sum_{k=1}^{\lfloor nt\rfloor}\Delta_k.
\]
Since $\rho_H(k)\to0$ as $|k|\to+\infty$, 
the Gaussian sequence $\{X_k\}$ is ergodic.  
Thus, Proposition~\ref{MCLT} implies that
$
\big\{  \frac1{\sqrt n}\sum_{k=1}^{\lfloor nt\rfloor}\xi_k:t\in[0,1] \big\}_{n\ge1}
$
is tight in $\mathbf{D}([0,1])$.  
Therefore, $\wt M_n$ converges in probability to the zero process
in $\mathbf{D}([0,1])$.

\medskip

It remains to prove the  tightness of $\{\wt N_n\}_{n\geq 1}$.
Put
$R_k:=\E[X_k | \F_{k-1}]$,
$\eps_k:=X_k-R_k$,
and
$Z_k:=\frac{R_k}{\sqrt{1-\nu^2}}$.
As in Lemma~\ref{lem_1}, we can write 
\begin{center}
$\Delta_k=P_\tau\varphi(Z_k)$
with  $e^{-2\tau}=1-\nu^2 < 1$.
\end{center}
Here and below, $P_\tau$ denotes the Ornstein-Uhlenbeck semigroup associated
with the Gaussian space generated by the sequence $\{X_k\}_{k\in\Z}$.
Let $\rho_Z(h)=\E[Z_0Z_h]$.  
As one can see from \eqref{seqa}-\eqref{seqb} in the proof of Lemma~\ref{lem_1},
we can find  a sequence $(a_h)_{h\in\Z}\in\ell^2(\Z)$ such that
\[
(1-\nu^2)\rho_Z(h)=\rho_H(h)-a_h.
\]

\noi
 Since $d\ge2$ and $|\rho_H(h)|\le C(1+|h|)^{-1/d}$, 
 we get $(a_h)_{h\in\Z}\in\ell^d(\Z)$ and thus\footnote{The constant 
 $C$ may vary from line to line 
 in this proof and $C$ does not depend on $n$, $t$, nor $s$.}
 
 \noi
\begin{align}\label{rhoZ}
\sum_{|h|\le m}|\rho_Z(h)|^d\leq C\log(e+m)
\end{align}

\noi
for any $m\geq 1$.  Next, we can carry the same computations
from \eqref{mark1} to \eqref{mark2} with 
$p = 1 +e^{2\tau} > 2$:
for $0\le s\le t\le1$,
\begin{align*}
\| \wt N_n(t) - \wt N_n(s) \|_p^2
&\leq  \frac{    \lfloor nt \rfloor  -  \lfloor ns \rfloor     }{n \log n} 
\sum_{q\geq d} c_q^2 q! \sum_{| k| \leq  \lfloor nt \rfloor  -  \lfloor ns \rfloor    } | \rho_Z(k)|^q 
\leq C \frac{\lfloor nt\rfloor-\lfloor ns\rfloor}{n},
\end{align*}

\noi
where the last inequality is obtained by applying \eqref{rhoZ} with $|\rho_Z|\leq1$ 
and $\| \varphi\|^2_{L^2(\g)}  = \sum_{q\geq d} c_q^2 q! <\infty$.
Consequently, 
Lemma~\ref{lem_NN20} gives us the tightness of $\{\wt N_n\}_{n\geq 2}$,
and thus we have the tightness of $\{ \wt{Y}_n\}_{n\geq 2}$.
Hence, $\wt Y_n \LRA \s_\ast W$  in $\mathbf{D}([0,1])$.

Finally,  the desired weak convergence of  the linearly interpolated version of $\wt Y_n$
 in $\mathbf{C}([0,1])$
follows from Proposition \ref{prop_C} with $b_n = \sqrt{\log n}$.
\hfill $\square$

\end{document}